\title{Character series and Sklyanin algebras at points of order 2}
\author{Kevin De Laet}
\address{Department of Mathematics - Computer Sciences, University of Antwerp\\
Campus Middelheim - M.G. 223, Middelheimlaan 1, B-2020 Antwerpen (Belgium) \\ {\tt kevin.delaet2@uantwerpen.be}}
\date{}
\tikzset{
  vertice/.style={circle,draw=black},
  decoration={markings,mark=at position 0.5 with {\arrow{>}}}
}
\newcommand{\wis}[1]{{\text{\em \usefont{OT1}{cmtt}{m}{n} #1}}}
\newcommand{\C}{\mathbb{C}}
\newcommand{\N}{\mathbb{N}}
\newcommand{\Z}{\mathbb{Z}}
\newcommand{\A}{\mathbb{A}}
\newcommand{\PP}{\mathbb{P}}
\newcommand{\Rmnum}[1]{\expandafter\@slowromancap\romannumeral #1@}
\theoremstyle{plain}
\newtheorem{theorem}{Theorem}[section]
\newtheorem{lemma}[theorem]{Lemma}
\newtheorem{proposition}[theorem]{Proposition}
\newtheorem{corollary}[theorem]{Corollary}
\newtheorem{remark}[theorem]{Remark}
\newtheorem{example}[theorem]{Example}
\theoremstyle{definition}
\newtheorem{mydef}[theorem]{Definition}
\DeclareMathOperator{\Hom}{Hom}
\DeclareMathOperator{\rank}{rank}
\DeclareMathOperator{\Frac}{Frac}
\DeclareMathOperator{\Grass}{Grass}
\DeclareMathOperator{\Emb}{Emb}
\numberwithin{equation}{section}
\begin{document}
\sloppy

\begin{abstract}
This paper has two goals: to prove certain properties of character series of graded algebras on which a finite group acts as algebra automorphisms and to provide a detailed analysis of representations of 5-dimensional Sklyanin algebras at points of order 2. We also prove that for any odd prime $p$, the $p$-dimensional Sklyanin algebras associated to points of order 2 are graded Clifford algebras.
\end{abstract}
\maketitle
\tableofcontents
\section{Introduction}
In \cite{odesskiifeigin} Odesskii and Feigin constructed for each $n \geq 3$ the $n$-dimensional elliptic Sklyanin algebras, which are noncommutative graded algebras depending on an elliptic curve $E$ and a point $\tau\in E$ with $n$ generators and $\binom{n}{2}$ relations. By definition, these algebras are deformations of the polynomial ring in $n$ variables and the Heisenberg group of order $n^3$ acts on these algebras as gradation preserving automorphisms. In particular for $n=3$, these objects form the generic family of Artin-Schelter (AS) regular algebras. The AS regular algebras form a class of noncommutative graded algebras with excellent homological properties and correspond to noncommutative projective spaces. The AS regular algebras of global dimension 3 were classified in \cite{artin1987graded} by Artin and Schelter. Afterwards, they were intensively studied by Artin, Tate, Van den Bergh and others in for example \cite{artin2007some} and \cite{artin1991modules}. However, in higher dimensions, these objects remain a bit of a mystery and a classification is not in sight.

\par In \cite{DeLaet} the author showed that for each odd prime $p$ there is a $\frac{p-1}{2}$-dimensional family of graded Clifford algebras of global dimension $p$ on which $H_p$, the finite Heisenberg group of order $p$, acts as gradation preserving automorphisms. For $p=3$, these Clifford algebras correspond to 3-dimensional Sklyanin algebras associated to points of order 2. For $p \geq 5$, not all of these Clifford algebras are Sklyanin algebras. However, it was conjectured in \cite{DeLaet} that the $p$-dimensional Sklyanin algebras associated to points of order 2 are indeed Clifford algebras.
\par In \cite{DeLaet}, a notion of a \textit{character series} was defined for affine graded algebras. For a connected, positively graded algebra $A$ on which a reductive group $G$ acts as gradation preserving automorphisms, a character series decodes how $A$ decomposes as a $G$-module.
\begin{theorem}
Let $G$ be a finite group and $V$ a finite dimensional $G$-representation. Let $Z$ be an irreducible variety parametrizing $G$-deformations up to degree $k$ of an algebra $A$ and assume that 
$$
\forall x,y \in Z: H_{A_x}(t) = H_{A_y}(t).
$$
Then we have that $Z$ parametrizes $G$-deformations of $A$.
\label{th:characterseries}
\end{theorem}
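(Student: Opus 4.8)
The plan is to upgrade the numerical equality of Hilbert series to an equality of characters by decomposing each graded piece under a single group element and then exploiting semicontinuity. I would first fix the description of the family: writing $A_x=T(V)/(R_x)$, the generating space $V$ in degree $1$ carries a $G$-action that does not vary with $x$ --- this is exactly what being a $G$-deformation up to degree $k$ freezes --- and the relations $R_x$ (supported in degrees $\le k$) form a $G$-subrepresentation depending algebraically on $x\in Z$. For each degree $d$ I would present the homogeneous component as a cokernel
$$
(A_x)_d=V^{\otimes d}\big/ I_d(x),\qquad I_d(x)=\operatorname{im}(\mu_x),
$$
where $\mu_x$ is a $G$-equivariant multiplication map assembled from $V$ and $R_x$, hence algebraic in $x$, and the target $V^{\otimes d}$ together with its $G$-action is fixed.

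Next I would fix $g\in G$ and pass to $g$-eigenspaces. Since the $G$-action on $V^{\otimes d}$ is independent of $x$, so is the eigenspace decomposition $V^{\otimes d}=\bigoplus_\zeta (V^{\otimes d})_\zeta$; let $\pi_\zeta$ be the (fixed) projection onto the $\zeta$-eigenspace. Equivariance of $\mu_x$ makes $\operatorname{im}(\mu_x)$ stable under $g$, so that $I_d(x)\cap (V^{\otimes d})_\zeta=\operatorname{im}(\pi_\zeta\circ\mu_x)$ and hence
$$
e_{d,\zeta}(x):=\dim(A_x)_{d,\zeta}=\dim(V^{\otimes d})_\zeta-\rank(\pi_\zeta\circ\mu_x)
$$
is an upper semicontinuous function on the irreducible variety $Z$, while $\chi_{(A_x)_d}(g)=\sum_\zeta \zeta\,e_{d,\zeta}(x)$.

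The crux is then a short formal argument. Summing over $\zeta$ gives $\sum_\zeta e_{d,\zeta}(x)=\dim(A_x)_d$, which is the coefficient of $t^d$ in $H_{A_x}(t)$ and is constant on $Z$ by hypothesis. So for each $d$ the $e_{d,\zeta}$ form a finite family of upper semicontinuous integer-valued functions with constant sum on an irreducible variety; I would argue that each attains its minimum on a dense open set, that these open sets meet in a nonempty open set on which all the minima are realised simultaneously, and that the constancy of the sum together with $e_{d,\zeta}(x)\ge\min e_{d,\zeta}$ at every point forces each $e_{d,\zeta}$ to be globally constant. This makes every $\chi_{(A_x)_d}(g)$ independent of $x$, so $(A_x)_d$ has constant character; as $d$ is arbitrary, the whole character series is constant along $Z$, which is precisely the statement that $Z$ parametrizes $G$-deformations of $A$.

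I expect the main obstacle to be the foundational bookkeeping rather than any single inequality: one must make sure the eigenspace decomposition of the fixed target $V^{\otimes d}$ really is $x$-independent (which is why the hypothesis fixes the low-degree, and in particular degree-$1$, $G$-module structure) and that $\mu_x$, and therefore $\pi_\zeta\circ\mu_x$, varies algebraically so that its rank is genuinely semicontinuous on $Z$. Once this is secured, the passage from Hilbert series to character series is forced by the constant-sum lemma, and no higher-degree input beyond the constancy of $H_{A_x}(t)$ is needed.
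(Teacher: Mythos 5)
Your argument is correct, but it takes a genuinely different route from the paper's own proof. The paper argues by contradiction between two points: if $(A_x)_l \ncong (A_y)_l$ in some minimal degree $l$, it invokes a Schur's-lemma-based lifting lemma to find a $G$-stable subspace $W \subset T(V)_l$ mapping isomorphically onto $(A_x)_l$, observes that the locus of $z \in Z$ where $W$ still injects into $(A_z)_l$ is open (and that injectivity upgrades to a $G$-isomorphism because the Hilbert series is constant), does the same at $y$ with a subspace $W'$, and then uses irreducibility to intersect the two open sets, giving $(A_x)_l \cong (A_a)_l \cong (A_y)_l$, a contradiction. You instead work one group element at a time: since $g$ acts semisimply and the eigenspace decomposition of the fixed space $V^{\otimes d}$ does not vary with $x$, each multiplicity $e_{d,\zeta}(x)=\dim(V^{\otimes d})_\zeta-\rank(\pi_\zeta\circ\mu_x)$ is upper semicontinuous, and your constant-sum lemma (finitely many upper semicontinuous integer-valued functions on an irreducible variety whose sum is constant are each constant) forces every multiplicity, hence every character $\chi_{(A_x)_d}(g)$, to be globally constant --- a direct argument with no contradiction and no choice of minimal degree. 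What your route buys is that it replaces the paper's lifting lemma by elementary rank semicontinuity, makes the exact roles of irreducibility and of the constant Hilbert series transparent, and establishes constancy of the full character series in one stroke; what the paper's route buys is brevity once its lemma is available, and it stays within the Grassmannian/$\Emb_G$ framework used throughout. The bookkeeping point you flag (algebraicity of $\mu_x$ in $x$) is genuine but harmless --- it is secured by locally trivializing the tautological bundles over the Grassmannian factors of $Z_k$, and the paper's proof implicitly relies on the same fact when it asserts the existence of the open sets $U$ and $U'$. One last cosmetic step: to match the paper's definition of ``parametrizes $G$-deformations of $A$'', note that $A$ itself corresponds to a point of $Z$, so constancy of the $G$-module type in each degree yields $(A_x)_i \cong A_i$ for all $i$ and all $x \in Z$.
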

This theorem is then applied for $G = H_p$ the finite Heisenberg group of order $p^3$ to calculate character series of homogeneous coordinate rings of elliptic curves embedded in $\PP^{p-1}$.
\par We then consider Sklyanin algebras associated to points of order 2. We prove in Section \ref{sec:Skly} the following theorem.
\begin{theorem}
The $p$-dimensional Sklyanin algebras with $p$ an odd prime associated to points of order 2 are graded Clifford algebras.\label{th:SklClif}
\end{theorem}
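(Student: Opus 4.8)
The plan is to start from the explicit Odesskii--Feigin presentation of the $p$-dimensional Sklyanin algebra $A = Q_p(E,\tau)$, generated by $x_0,\dots,x_{p-1}$ (indexed by $\Z_p$, with $H_p$ acting by the cyclic shift and a diagonal character) and subject to $\binom{p}{2}$ quadratic relations whose structure constants are ratios of theta functions evaluated at $0$, $\tau$ and $-\tau$. The first step is to exploit the hypothesis $2\tau = 0$: for a nontrivial $2$-torsion point one has $\tau \equiv -\tau$ modulo the period lattice, so the theta values controlling the relations satisfy extra symmetry and sign identities. I would record these identities carefully and substitute them into the relations, which I expect to collapse the structure constants enough that, after reindexing, the quadratic relations split in a controlled way into a symmetric and an antisymmetric part.

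The second and central step is to produce, by an $H_p$-equivariant linear change of the degree-one generators, a basis $u_0,\dots,u_{p-1}$ in which the relations take graded Clifford form, i.e. each anticommutator $u_i u_j + u_j u_i$ equals a fixed degree-two element $\sum_k (M_k)_{ij}\, z_k$ with symmetric matrices $M_k$ and central $z_k$. Because $H_p$ acts as graded automorphisms, the sought $p$-dimensional space of central degree-two elements must itself be an $H_p$-submodule of $A_2$; I would use the decompositions of $S^2V$ and $\Lambda^2 V$ as $H_p$-modules (where $V$ is the span of the generators), together with the constraints coming from the eigenspace grading under the diagonal subgroup of $H_p$, to pin down this candidate central subspace and hence the matrices $M_k$. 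The heart of the argument is then to verify genuine centrality, $[\,u_i u_j + u_j u_i,\, u_l\,] = 0$ for all $l$; this follows directly from the multiplication rules once the $2$-torsion theta identities of the first step are fed in.

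Finally I would confirm that the data $(M_1,\dots,M_p)$ define a genuine graded Clifford algebra and not a degenerate quotient: it suffices to check that the associated linear system of quadrics in $\PP^{p-1}$ is base-point-free, which guarantees that the Clifford algebra is Artin--Schelter regular with Hilbert series $(1-t)^{-p}$. Since this Hilbert series coincides with that of $A$ and the presentations match, the two algebras are isomorphic. Alternatively one can route the comparison through the character-series framework: both algebras are $H_p$-deformations of the polynomial ring with the same Hilbert series, so Theorem \ref{th:characterseries} exhibits the Sklyanin algebras at $2$-torsion as a subfamily of the $\tfrac{p-1}{2}$-dimensional family of graded Clifford algebras constructed in \cite{DeLaet}.

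The main obstacle I anticipate is the second step: carrying out the theta-function computation at $2$-torsion precisely enough to establish centrality of the anticommutators. The $2$-torsion identities are exactly what force the symmetric part of the relations to become central, and getting the signs and the vanishing pattern of the structure constants right --- uniformly in the odd prime $p$ --- is the delicate part; by contrast the base-point-freeness check and the Hilbert-series comparison should be comparatively routine.
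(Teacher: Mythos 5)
Your overall shape (use $\tau=-\tau$ on the theta coefficients, extract a Clifford presentation, close with a Hilbert-series comparison) points in the right direction, but the step you yourself call ``the heart of the argument'' is asserted rather than proved, and that is a genuine gap. You claim that centrality of the anticommutators ``follows directly from the multiplication rules once the $2$-torsion theta identities are fed in''; it does not. If all you know is that the relations read $x_ax_b+x_bx_a=a_{(a-b)/2}\,x_{(a+b)/2}^2$ (indices in $\Z_p$), then attempting to verify $[x_c,x_k^2]=0$ only produces the self-referential identity $[x_c,x_k^2]=-a_{(c-k)/2}[x_k,x_{(c+k)/2}^2]$, i.e.\ a linear system among the unknown commutators, not a vanishing statement. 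The paper never verifies centrality directly; its proof rests on three concrete devices missing from your sketch: (i) since the relation space is an $H_p$-submodule $V_2^{\oplus\frac{p-1}{2}}\subset V_1\otimes V_1\cong V_2^{\oplus p}$, it is generated by its $e_2$-fixed part, so it suffices to analyze the $\frac{p-1}{2}$ Odesskii--Feigin relations with $i=-j$; (ii) for these, $2$-torsion makes the coefficient of $x_{j-r}x_{-j+r}$ equal to that of $x_{-j+r}x_{j-r}$ (substitute $r\mapsto 2j-r$ and use $\theta_k(-\tau)=\theta_k(\tau)$), so each such relation lies in the $\frac{p+1}{2}$-dimensional span of $\{x_a,x_{-a}\}$, $1\le a\le\frac{p-1}{2}$, and $x_0^2$ --- in particular no $H_p$-equivariant change of generators is needed, the original $x_i$ already work; (iii) a rank argument on the resulting $\frac{p-1}{2}\times\frac{p+1}{2}$ coefficient matrix (it has rank $\frac{p-1}{2}$, and the anticommutator block must have full rank, the alternative leading to a contradiction) solves the relations as $\{x_a,x_{-a}\}+a_ax_0^2=0$. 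This is exactly the presentation of the $H_p$-Clifford algebras of \cite{DeLaet}, which finishes the proof; alternatively your closing Hilbert-series comparison does the job, and note that base-point-freeness is not needed for it: any graded Clifford algebra is free of rank $2^p$ over $\C[y_1,\ldots,y_p]$ with $\deg y_i=2$, hence has Hilbert series $(1-t)^{-p}$ regardless.

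Your fallback route should be discarded: it is not merely incomplete but false. By Theorem \ref{th:Sklychar}, \emph{every} $p$-dimensional Sklyanin algebra $Q_{p,1}(E,\tau)$ --- for arbitrary $\tau$, not just $2$-torsion --- has the same character series as $\C[V_1]$ and is an $H_p$-deformation of the polynomial ring, while for $p\geq 5$ the generic Sklyanin algebra is not a Clifford algebra (the Clifford family is $\frac{p-1}{2}$-dimensional and meets the $2$-dimensional Sklyanin family only in a curve). Equality of character series constrains only the $H_p$-module structure of the algebra, not where its relation module sits inside $\Grass(\frac{p-1}{2},p)$; Theorem \ref{th:characterseries} therefore cannot detect the $2$-torsion condition, and no character-series argument can place the Sklyanin relations inside the Clifford locus. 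The theta-function computation is unavoidable.
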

Using this theorem, we study the simple representations of the 5-dimensional Sklyanin algebras associated to points of order 2. The associated $\wis{Proj}(A)$ is determined by using this theorem and the statements from \cite{LeBruynCentral}.
\subsection{Acknowledgement}
I would like to thank Theo Raedschelders for the reading of the original form of this paper and his suggestions to make it more readable. In addition, I would like to thank professor Lieven Le Bruyn for his tremendous patience with me.
\subsection{Notation} In this article, we use the following notations:
\begin{itemize}
\item $\mathbf{V}(I)$ for $I \subset \C[a_1,\ldots,a_n]$ an ideal is the Zariski-closed subset of $\A^n$ or $\PP^{n-1}$ determined by $I$, it will be clear from the context if the projective or affine variety is used. 
\item $D(I)$ for $I$ an ideal $I \subset \C[a_1,\ldots,a_n]$ is the open subset $\A^n \setminus \mathbf{V}(I)$ or $\PP^{n-1} \setminus \mathbf{V}(I)$, it will be clear from the context if it is an open subset of affine space or of projective space. If $I = (a)$, then we write $D(a)$ for $D(I)$.
\item $\Z_n = \Z/n\Z$ for $n \in \mathbb{N}$.
\item $\Grass(m,n)$ will be the projective variety parametrizing $m$-dimensional vector spaces in $\C^n$.
\item For an algebra $A$ and elements $x,y \in A$, $\{x,y\} = xy + yx$.
\item For $V$ a $n$-dimensional vector space, we set $T(V) = \oplus_{k=0}^\infty V^{\otimes k}$, the tensor algebra over $V$.
\item Every graded algebra $A$ will be positively graded, finitely generated over $\C$ and connected, that is $A_0 = \C$.
\item The group $\wis{SL}_m(p)$ (respectively $\wis{PSL}_m(p)$) is the special linear group (respectively projective special linear group) of degree $m$ over the finite field with $p$ elements.
\item If $E$ is an elliptic curve over $\C$ and $n \in \N$, then $E[n]$ is the group of $n$-torsion points of $E$,
$$
E[n] = \{P \in E| [n]P = \underbrace{P + P + \ldots + P}_{n \text{ times}}=0\}.
$$
This group is always isomorphic to $\Z_n \times \Z_n$.
\item For any vector space $V$, $\C[V] = T(V)/(wv-vw|w,v \in V)$.
\item If $A$ is a connected, finitely generated, positively graded algebra, then the Hilbert series is defined as $H_A(t) = \sum_{k=1}^\infty \dim A_k t^k$.
\item Take a reductive group $G$ and 2 finite dimensional representations $V,W$ of $G$. Then $\Emb_G(V,W)$ is the set of injective linear $G$-maps from $V$ to $W$.
\item If $A$ is an algebra, then $\wis{Max}(A)$ is the set of maximal ideals of $A$.
\end{itemize}
\section{The finite Heisenberg group}
\label{sec:Heis}
Let $p$ be any odd prime. In this section we discuss the definition and the representation theory of the finite Heisenberg groups of order $p^3$. For a more thorough study of these groups we refer to \cite{Grassberger}.
\begin{mydef}
\textit{The Heisenberg group of order} $p^3$ is the finite group given by the generators and relations
\begin{displaymath}
H_p = \langle e_1,e_2,z | e_1^p = e_2^p=z^p = 1,e_1e_2 = ze_2e_1, e_1z = z e_1,e_2z = z e_2\rangle 
\end{displaymath}
\end{mydef}
$H_p$ is a central extension of the group $\mathbb{Z}_p \times \mathbb{Z}_p$, that is, we have the exact sequence
\begin{eqnarray}
 \xymatrix{1 \ar[r]& \mathbb{Z}_p \ar[r] & H_p \ar[r]&\mathbb{Z}_p \times \mathbb{Z}_p \ar[r]&  1}.
 \label{eqn:centralext}
\end{eqnarray}
The center of $H_p$ is generated by $z$. All of the 1-dimensional simple representations of $H_p$ are induced by the characters of $\mathbb{Z}_p \times \mathbb{Z}_p$. The other simple representations are $p$-dimensional and are determined by a primitive $p$th root of unity. They are defined in the following way: choose a primitive $p$th root of unity $\omega$, then define an action of $H_p$ on the vector space $V_1 = \mathbb{C}x_0 + \ldots + \mathbb{C}x_{p-1}$ by the rule
\begin{eqnarray}
e_1 \cdot x_i = x_{i-1}, & e_2 \cdot x_i= \omega^i x_i, &z \cdot x_i = \omega x_i ,
\label{eqnarray:simplereps}
\end{eqnarray}
indices taken in $\Z_p$. Taking another primitive root $\omega^i, 2\leq i \leq p-1$ determines a non isomorphic simple representation $V_i$. This implies that there are $p^2$ 1-dimensional and $p-1$ $p$-dimensional irreducible representations.
\par There are $p^2+p-1$ conjugacy classes in $H_p$, 1 for each central element and the other $p^2-1$ classes contain a unique element of the form $e_1^a e_2^b$, $a,b \in \mathbb{Z}_p, (a,b) \neq (0,0)$.
\par The character of a simple $p$-dimensional representation $V_i$ is given by 
\begin{align*}
\chi(z^k) &= p \omega^{ik} \\
\chi(e_1^a e_2^b) &= 0, (a,b) \neq (0,0).
\end{align*}
Such a representation $V_i$ also defines an antisymmetric bilinear form on the $\mathbb{Z}_p$-vector space $\mathbb{Z}_p \times \mathbb{Z}_p$. Identifying $e_1$ and $e_2$ with their images in $\mathbb{Z}_p \times \mathbb{Z}_p$, we get this form by setting $\langle e_1,e_2 \rangle = \omega^i$ and extending it $\Z_p$-linearly to $\mathbb{Z}_p \times \mathbb{Z}_p$, thus
$$
\langle a e_1 + b e_2,c e_1 + d e_2 \rangle = (\omega^i)^{ad-bc}.
$$
Let $\mu_p$ be the $p$th roots of unity in $\C$. If we define a group morphism $\xymatrix{\langle z \rangle \ar[r]^-\phi& \mu_p}$ by $\phi(z) = \omega^i$ (written multiplicatively in $\mu_p$), we get the commutative diagram below.
\begin{displaymath}
\xymatrix{H_p \times H_p \ar[r] \ar[d]^-{[,]} &\mathbb{Z}_p \times \mathbb{Z}_p \ar[d]^-{\langle , \rangle} \\
\langle z \rangle \ar[r]^-\phi & \mu_p }
\end{displaymath}
Since every $p$-dimensional representation is determined by the image of $z$, every nontrivial antisymmetric bilinear form on $\mathbb{Z}_p \times \mathbb{Z}_p$ uniquely defines a simple representation of $H_p$. Conversely, every simple $p$-dimensional representation of $H_p$ defines a unique nontrivial antisymmetric bilinear form on $\mathbb{Z}_p \times \mathbb{Z}_p$ by extending linearly $\langle e_1,e_2 \rangle = \frac{\chi(z)}{p}$.
\par From now on, fix a simple $p$th root of unity $\omega$. We will write $V_i$ for the simple $p$-dimensional representation for which $\chi_{V_i}(z) = p \omega^i$ and $\chi_{a,b}$ for the 1-dimensional representation defined by \begin{eqnarray}
\chi_{a,b}(e_1) = \omega^a, &\chi_{a,b}(e_2) = \omega^b.
\end{eqnarray}

\begin{remark}
When $n\geq 3$ is not prime, $H_n$ is still a central extension of $\Z_n \times \Z_n$ with $\Z_n$ and the exact sequence from \ref{eqn:centralext} is still valid. The simple $n$-dimensional representations are also uniquely determined by a primitive $n$th root of unity and defined as in \ref{eqnarray:simplereps}, but there are other simple representations of dimension $1 < k < n$ to consider (cfr. \cite{Grassberger}).
\end{remark}
\subsection{Heisenberg geometry on $\PP^{p-1}$}
Let $V= V_1$ be the associated simple $H_p$-representation. Then this defines a group action of $\Z_p \times \Z_p$ on $\PP^{p-1} = \PP(V)$ by the composition
$$
\xymatrix{H_p \ar[r]^-\phi & \wis{GL}(V) \ar[r]^-p & \wis{PGL}(V)}.
$$
It is clear that the center of $H_p$ is the kernel of $\psi = p \circ \phi$. The group $\wis{SL}_2(p)$ acts on $\Z_p \times \Z_p$ in the obvious way as group automorphisms. This action of $\wis{SL}_2(p)$ can be extended to an action on $H_p$ in a way that the center of $H_p$ is invariant under this action. From this it follows that each $p$-dimensional simple representation $V_i$ we have
$$
\forall \psi \in\wis{SL}_2(p): V_i \cong V_i^{\psi} \text{ as $H_p$-representation},
$$
with $V_i^\psi$ the $H_p$-representation one gets by twisting the morphism $\xymatrix{H_p \ar[r]^-\phi & \wis{GL}(V)}$ with the automorphism $\psi$ of $H_p$. In particular, this implies that there exists a group homomorphism
$$
\xymatrix{\wis{SL}_2(p) \ar[r] & \wis{PGL}(V)}.
$$
Together with the induced action of $\Z_p \times \Z_p$ on $\PP(V)$, this defines a projective representation of $(\Z_p \times \Z_p) \rtimes \wis{SL}_2(p)$ on $\PP(V)$.
\begin{proposition}
There exists a bijection as $\wis{SL}_2(p)$-sets between the following sets
$$
\xymatrix{ \PP^1_{\Z_p} \ar@{<->}[r] & \{\Z_p \times \Z_p-\text{orbits with non trivial stabilizer} \}}
$$
\end{proposition}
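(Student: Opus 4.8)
The plan is to identify the points of $\PP^1_{\Z_p}$ with the $p+1$ subgroups of order $p$ in $G := \Z_p \times \Z_p$, and to attach to each such subgroup $H$ the set $\mathrm{Fix}(H) \subset \PP(V)$ of points fixed by $H$ under the induced action. I would then show that $\mathrm{Fix}(H)$ is a single $G$-orbit of size $p$ whose stabilizer is exactly $H$, and that the assignment $H \mapsto \mathrm{Fix}(H)$ is the desired bijection of $\wis{SL}_2(p)$-sets. Throughout, recall that a point $[v] \in \PP(V)$ is fixed by $g \in G$ precisely when $v$ is an eigenvector of $\phi(g) \in \wis{GL}(V)$, so fixed points of $g$ are the projectivizations of the eigenlines of $\phi(g)$.

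First I would record two structural facts. No point of $\PP(V)$ is fixed by all of $G$: a common eigenvector $v$ of $\phi(e_1)$ and $\phi(e_2)$ in $\wis{GL}(V)$, say with $\phi(e_1)v = \lambda v$ and $\phi(e_2)v = \mu v$, would via $\phi(e_1)\phi(e_2) = \omega\,\phi(e_2)\phi(e_1)$ give $\lambda\mu = \omega\lambda\mu$, forcing $\omega = 1$, a contradiction. Since $G$ is abelian of order $p^2$, this means every non-trivial stabilizer has order exactly $p$ and every orbit with non-trivial stabilizer has size $p$. Second, a direct computation shows that the cyclic shift matrix $\phi(e_1)$ has the $p$ distinct $p$-th roots of unity as eigenvalues, hence exactly $p$ eigenlines and $p$ fixed points in $\PP(V)$. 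Because $\wis{SL}_2(p)$ acts transitively on $G \setminus \{0\}$ (the stabilizer of a non-zero vector is unipotent of order $p$, giving orbit size $p^2-1 = |G\setminus\{0\}|$) and the projective action of the semidirect product satisfies $\psi \cdot (g \cdot x) = \psi(g) \cdot (\psi \cdot x)$ in $\wis{PGL}(V)$, every non-trivial $g \in G$ can be carried to $e_1$ by some $\psi$, whence $\mathrm{Fix}(g) = \psi^{-1}\cdot \mathrm{Fix}(e_1)$ also has exactly $p$ elements.

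With these in hand the bijection is immediate. For a subgroup $H = \langle g \rangle$ of order $p$ we have $\mathrm{Fix}(H) = \mathrm{Fix}(g)$, a set of $p$ points which $G$ preserves (as $G$ is abelian); by the first fact each of these points has stabilizer exactly $H$, so its $G$-orbit has size $p$ and therefore fills $\mathrm{Fix}(H)$. Thus $\mathrm{Fix}(H)$ is a single orbit with stabilizer $H$. Conversely, any orbit with non-trivial stabilizer $H$ sits inside $\mathrm{Fix}(H)$ and, having size $p = |\mathrm{Fix}(H)|$, equals it; so the two maps are mutually inverse. For equivariance, $\wis{SL}_2(p)$ permutes the order-$p$ subgroups of $G$ (this is precisely its action on $\PP^1_{\Z_p}$), while the conjugation relation above yields $\psi \cdot \mathrm{Fix}(H) = \mathrm{Fix}(\psi(H))$, so the bijection commutes with both $\wis{SL}_2(p)$-actions.

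The hard part will be the fixed-point count, namely establishing that every non-trivial element of $G$ has exactly $p$ fixed points on $\PP(V)$. Rather than diagonalizing a general $\phi(e_1^a e_2^b)$ by hand, I expect the cleanest route is the reduction just described: compute the spectrum only in the single explicit case $g = e_1$, and then transport it to all other elements using transitivity of $\wis{SL}_2(p)$ on $G \setminus \{0\}$ together with the equivariance of the projective action. Everything else is bookkeeping with orbit sizes.
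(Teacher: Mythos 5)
Your proof is correct. There is, however, nothing in the paper to compare it against: the paper's ``proof'' of this proposition is only a citation to Lemma 5.2 of \cite{DeLaet}, so your argument serves as a self-contained replacement rather than a variant of an internal proof. Every step checks out: identifying $\PP^1_{\Z_p}$ with the $p+1$ order-$p$ subgroups $H \subset G = \Z_p \times \Z_p$; the observation that no point of $\PP(V)$ is fixed by all of $G$, via the relation $\phi(e_1)\phi(e_2) = \omega\,\phi(e_2)\phi(e_1)$ (so nontrivial stabilizers have order exactly $p$ and the corresponding orbits have size $p$); the eigenvalue computation showing the shift $\phi(e_1)$ has $p$ distinct eigenvalues, hence exactly $p$ fixed points; the transport of this count to every nontrivial $g \in G$ using transitivity of $\wis{SL}_2(p)$ on $G \setminus \{0\}$ together with the compatibility $\psi \cdot (g \cdot x) = \psi(g)\cdot(\psi \cdot x)$; and the cardinality argument matching each orbit with nontrivial stabilizer $H$ to $\mathrm{Fix}(H)$. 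Two small points of hygiene are worth making explicit. First, when you say a point $[v]$ is fixed by $g \in \Z_p \times \Z_p$ precisely when $v$ is an eigenvector of $\phi(g)$, you really mean an eigenvector of $\phi(\tilde g)$ for any lift $\tilde g \in H_p$; this is well-defined because two lifts differ by a central element acting as a scalar, and your argument silently uses this. Second, the compatibility relation you invoke is exactly what the paper's construction of the projective representation of $(\Z_p \times \Z_p) \rtimes \wis{SL}_2(p)$ on $\PP(V)$ provides, so citing that paragraph closes the loop. As a bonus, your route gives the count of $(p+1)p$ points with nontrivial stabilizer, which is consistent with the $30$ singular points of $S_{15}$ appearing in the paper's $p=5$ discussion.
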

\begin{proof}
See for example \cite[Lemma 5.2]{DeLaet}.
\end{proof}
\subsection{The Heisenberg group and elliptic curves}
\label{sub:Mod}
As is well known, the modular group $\Gamma = \wis{PSL}_2(\mathbb{Z})$ acts on the complex upper half-plane
\begin{displaymath}
\mathbb{H}=\{x+iy | y>0\}
\end{displaymath}
by M\"obius transformations. The fundamental domain of this action defines isomorphism classes of elliptic curves and its compactification, made by adding the $\Gamma$-orbit $\overline{\mathbb{Q}}=\mathbb{Q}\cup \{\infty\}$, is the Riemann sphere $S^2$. In general, one can take any other group of finite index $G$ in $\Gamma$, find its fundamental domain in $\mathbb{H}$ and check what information a point in this domain holds. The modular curve $X'(p)$, $p$ prime is made this way by taking $G = \Gamma(p)$, with
\begin{displaymath}
\Gamma(p) = \left\lbrace \begin{bmatrix}
a & b \\ c & d
\end{bmatrix} \in \Gamma | a,d \equiv 1 \bmod p , b,c \equiv 0 \bmod p \right\rbrace.
\end{displaymath}
In order to compactify $X'(p)$, one needs to add cusps to get $X(p)$.
A point of $X'(p)$ holds 3 pieces of information:
\begin{itemize}
\item an elliptic curve $(E,O)$,
\item an embedding of $\mathbb{Z}_p \times \mathbb{Z}_p$ into $E$ or equivalently, two generators $e_1,e_2$ of $E[p]$.
\item a primitive $p$th root of unity $\omega$ such that $\langle e_1,e_2 \rangle = \omega$, where this bilinear antisymmetric form is found by the Weil-pairing.
\end{itemize}
$X(p)$ has an action of $\wis{PSL}_2(p) = \Gamma/\Gamma(p)$ by definition. This action is defined by taking another set of generators of $E[p]$, $f_1,f_2$, but their inner product must still remain $\omega$. This defines a $\wis{SL}_2(p)$-action, but since $-I_2$ works trivially on $X(p)$, we have a $\wis{PSL}_2(p)$-action.
\par We know that a bilinear antisymmetric form on $\mathbb{Z}_p \times \mathbb{Z}_p$ defines a unique simple $H_p$-representation $V$. Let $P_1,P_2$ be a generating set of $E[p]$ and denote $P_{a,b} = [a]P_1+[b]P_2$, then there exists a (unique up to multiplication with a scalar) function $f$ on $E$ with divisor
\begin{displaymath}
-(P_{0,0} + \ldots + P_{0,p-1})+ P_{p-1,0}+\ldots + P_{p-1,p-1}.
\end{displaymath}
In \cite{Silverman} it is proved that there exists a primitive $p$th root of unity $\omega$ such that $\omega = \frac{f}{\phi_{P_2}^*(f)}$, where $\phi_{P}^*$ stands for the pullback under the morphism
\begin{displaymath}
\xymatrix{E \ar[r]^-\phi & E}, \xymatrix{\tau \ar@{|->}[r] &\tau + P}.
\end{displaymath}
Calculating the divisor, one finds that the function 
\begin{displaymath}
N(f)=f \phi_{P_1}^*(f)\ldots (\phi_{P_1}^*)^{p-1}(f)
\end{displaymath}
is constant and not 0, which means we can rescale $f$ so that $N(f) = 1$. We will now define an action of $H_p$ on the vector space
\begin{displaymath}
H^0(E,\mathcal{O}(P_{0,0} + \ldots + P_{0,p-1})).
\end{displaymath}
Let $x_0 = 1$ and define
\begin{eqnarray}
e_1 \cdot g = f\phi_{P_1}^*(g),& e_2 \cdot g= \phi_{P_2}^*(g).
\end{eqnarray}
If we set $x_i =e_1^{p-i}\cdot x_0$, we find that 
\begin{eqnarray}
e_1\cdot x_i = x_{i-1}, & e_2\cdot x_i = \omega^i x_i.
\end{eqnarray}
This defines our action of $H_p$. These global sections define an embedding of $E$ into $\mathbb{P}^{p-1}$ and it is clear that the defining equations will be $H_p$-invariant.
\section{Graded Clifford algebras}
\label{sec:GradCliff}
We will work with graded Clifford algebras. This section will deal with the particular case we are interested in, but this is not the general definition. For more information, see \cite{LeBruynCentral}.
\begin{mydef} Let $R=\C[y_1,\ldots,y_n]$ be a polynomial ring in $R$ variables, graded such that $\deg y_i = 2, 1 \leq i \leq n$  and let $M$ be a symmetric matrix with entries in $R_2$, $\det(M) \neq 0$. Then \textit{the graded Clifford algebra} $A(M)$ associated to $M$ is the algebra generated by $x_1,\ldots,x_n,y_1\ldots,y_n$ with relations
\begin{eqnarray}
x_i x_j + x_j x_i = M_{ij},& [x_i,y_j]= 0,& [y_i,y_j]=0, 1 \leq i,j\leq n
\end{eqnarray}
and $\deg(x_i) = 1, 1 \leq i \leq n$.
\end{mydef}
\begin{proposition}
$A(M)$ is a free module of rank $2^n$ over $\C[y_1,\ldots,y_n]$.
\end{proposition}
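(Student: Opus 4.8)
The plan is to treat $R = \C[y_1,\ldots,y_n]$ as a central base ring and to show that the $2^n$ square-free ordered monomials
$$
x_S = x_{i_1}x_{i_2}\cdots x_{i_k}, \qquad S = \{i_1 < i_2 < \cdots < i_k\} \subseteq \{1,\ldots,n\},
$$
form an $R$-basis of $A(M)$. First I would establish that these monomials span $A(M)$ over $R$. Since the $y_j$ are central, every word in the generators can be rewritten with all $y$-factors collected into a coefficient in $R$, leaving a word in the $x_i$. The defining relation $x_ix_j = -x_jx_i + M_{ij}$ together with $x_i^2 = \tfrac12 M_{ii}$ lets me repeatedly swap adjacent out-of-order factors (each swap producing a sign plus a term of strictly smaller $x$-degree whose coefficient $M_{ij}\in R_2$ is central) and delete repeated factors. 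Inducting on $x$-degree, any word reduces to an $R$-linear combination of the $x_S$, so $A(M)$ is generated by at most $2^n$ elements over $R$.

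The real content is the $R$-linear independence of the $x_S$, which I would obtain from Bergman's diamond lemma applied to the free $R$-ring on $x_1,\ldots,x_n$ with the reduction system
$$
x_ix_j \longrightarrow -x_jx_i + M_{ij}\ (i>j), \qquad x_i^2 \longrightarrow \tfrac12 M_{ii}.
$$
Because $R$ is the central base ring, the $y$-variables require no reduction rules, and the only ambiguities are overlaps of these two $x$-rules. There are exactly two to check: the self-overlap $x_i^3$, and the triple overlap $x_ix_jx_k$ with $i>j>k$. I expect the self-overlap to be immediate from the centrality of $M_{ii}$. The triple overlap is the main obstacle: reducing $(x_ix_j)x_k$ and $x_i(x_jx_k)$ must yield the same normal form. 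Carrying both reductions down to the ordered monomial $x_kx_jx_i$, the two paths produce the same linear correction $M_{jk}x_i - M_{ik}x_j + M_{ij}x_k$ precisely because $M$ is symmetric and its entries are central; this is exactly where the hypotheses on $M$ enter.

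Once confluence is verified, the diamond lemma guarantees that the irreducible words---exactly the square-free ordered monomials $x_S$---are $R$-linearly independent. Combined with the spanning statement, the $2^n$ elements $x_S$ form a free $R$-basis, proving that $A(M)$ is a free $\C[y_1,\ldots,y_n]$-module of rank $2^n$. As an alternative route to independence, one may filter $A(M)$ by $x$-degree: the associated graded algebra has relations $x_ix_j + x_jx_i = 0$, hence is a quotient of $\Lambda(\C^n)\otimes_\C R$, and exhibiting the spinor (creation--annihilation) representation of $A(M)$ on the free rank-$2^n$ module $\Lambda(\C^n)\otimes_\C R$ forces this quotient to be an isomorphism. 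I would keep the diamond-lemma argument as the primary approach, since it dispatches spanning and independence uniformly.
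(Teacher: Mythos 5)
The paper itself never proves this proposition: it is quoted as a standard fact about graded Clifford algebras, with a pointer to \cite{LeBruynCentral} for details. So any complete argument you give is necessarily a self-contained alternative, and your strategy is the right one: the relations $[x_i,y_j]=[y_i,y_j]=0$ make $R=\C[y_1,\ldots,y_n]$ central, so $A(M)$ is the quotient of the free $R$-ring $R\langle x_1,\ldots,x_n\rangle$ by the relations $x_ix_j+x_jx_i=M_{ij}$, and Bergman's diamond lemma (which works over an arbitrary central coefficient ring) applies to your reduction system, with the $2^n$ strictly increasing words as normal forms. The spanning argument and the choice of ordering are standard and unproblematic.

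The gap is in your confluence check, and in a diamond-lemma argument the exhaustive enumeration of ambiguities \emph{is} the content of the proof. Your two rule families $x_ix_j\to -x_jx_i+M_{ij}$ ($i>j$) and $x_i^2\to\tfrac12 M_{ii}$ have four overlap ambiguities, not two: besides $x_i^3$ and $x_ix_jx_k$ ($i>j>k$), there are the mixed overlaps $x_i^2x_j$ and $x_ix_j^2$ with $i>j$, where the square rule and the swap rule interact. These do resolve: for $x_i^2x_j$, one path gives $\tfrac12 M_{ii}x_j$ immediately, while the other gives $x_i(x_ix_j)\to -x_ix_jx_i+M_{ij}x_i\to x_jx_i^2-M_{ij}x_i+M_{ij}x_i\to \tfrac12 M_{ii}x_j$, the $M_{ij}$-terms cancelling by centrality; the overlap $x_ix_j^2$ is handled the same way. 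So your proof is completable with two more short computations, but as written the claim that there are ``exactly two'' ambiguities is false and leaves the key verification unfinished. A smaller imprecision: symmetry of $M$ is not what makes the triple overlap resolve (both reduction paths only ever produce coefficients $M_{ab}$ with $a>b$); symmetry is used earlier, to see that the rules indexed by $i>j$ together with the square rule already encode all of the defining relations. Your fallback route---filtering by $x$-degree and exhibiting the spinor representation on $\Lambda(\C^n)\otimes_\C R$---is also sound, and is closer in spirit to how the Clifford-algebra literature cited by the paper treats the statement.
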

The center of $A(M)$ depends on the parity of $n$:
\begin{itemize}
\item If $n$ is even, then $Z(A(M)) = \C[y_1,\ldots,y_n]$, a polynomial ring in $n$ variables.
\item If $n$ is odd, then $Z(A(M)) = \C[y_1,\ldots,y_n,g]$, with $g$ a central element of degree $n$ fulfilling the relation $g^2 = \det (M)$.
\end{itemize}
\subsection{Representations of graded Clifford algebras}
In order to describe the representation theory of $A(M)$, we repeat the definition of the PI degree of an algebra finite over it's center.

\begin{mydef}
Let $A$ be a finite module over it's center $Z(A)$, with $Z(A)$ a normal domain. Then we define the \textit{PI degree} of $A$ to be $\sqrt{\dim_K A \otimes_{Z(A)} K}$, with $K =\Frac Z(A)$.
\end{mydef}
The PI degree of an algebra $A$ finite over it's center is equal to
$$a=\max\{m \in \N| \exists \phi: A \ \wis{M}_m(\C) \text{ simple}\}.$$
The set of points $\mathfrak{m} \in \wis{Max}(Z(A))$ such that there exists a maximal ideal $\mathfrak{M} \in \wis{Max}(A)$ such that $\mathfrak{M} \cap Z(A) = \mathfrak{m}$ and $A/\mathfrak{M} \cong \wis{M}_a(\C)$ is called the Azumaya locus of $A$.
\par Returning to the the special case of graded Clifford algebras, let $\mathfrak{m}$ be a maximal ideal of $\C[y_1,\ldots,y_n]$ and let $n$ be odd. It is easy to see (using the theory of Clifford algebras over $\C$, see for example \cite{Clifford2000} and \cite{LeBruynCentral}) that the dimension of simple representations depends on the rank of $M$ after taking the quotient with respect to $\mathfrak{m}$. Let $Y_\mathfrak{m}$ be the corresponding symmetric matrix in $\wis{M}_n(\C)$.
\begin{itemize}
\item If $\rank Y_\mathfrak{m} = n$, then there are 2 simple $2^{\frac{n-1}{2}}$-dimensional representations. These 2 representations are separated in the center by the cover $\xymatrix{\wis{Max}(Z(A)) \ar@{->>}[r] & \wis{Max}(\C[y_1,\ldots,y_n])}$ coming from the inclusion $\xymatrix{\C[y_1,\ldots,y_n] \ar@{^{(}->}[r] & Z(A)}$.
\item If $\rank Y_\mathfrak{m} = n-1$, then there is 1 simple $2^{\frac{n-1}{2}}$-dimensional representation lying over $\mathfrak{m}$.
\item If $\rank Y_\mathfrak{m} = k$, $k$ odd, then there are 2 $2^{\frac{k-1}{2}}$-dimensional simple representations lying over $\mathfrak{m}$.
\item If $\rank Y_\mathfrak{m} = k$, $k$ even, then there is 1 $2^{\frac{k}{2}}$-dimensional simple representation lying over $\mathfrak{m}$.
\end{itemize}
\subsection{The $\wis{Proj}$ of graded Clifford algebras}
In noncommutative algebraic geometry, one studies $\wis{Proj}(A)$, which is the quotient category of all graded $A$-modules by the full subcategory of graded torsion modules. Of particular interest are the linear modules, that is, left graded critical $A$-modules with Hilbert series $\frac{1}{(1-t)^n}$ for some $n$. This $n$ is called the dimension of the module. If $n=1$, one speaks of point modules, $n=2$ are line modules, and so on.
\par However, in some cases there are other modules to consider: fat point modules. These fat points are critical modules with in their class in $\wis{Proj}(A)$ a representative module with Hilbert series $\frac{e}{(1-t)}$ with $e > 1$. This $e$ is called the multiplicity of the corresponding module. In the spirit of noncommutative algebraic geometry, these fat points and point modules correspond to the simple objects in $\wis{Proj}(A)$.
\par For a graded Clifford algebra $A(M)$ with $M$ the corresponding symmetric matrix in $\wis{M}_n(\C[y_1,\ldots,y_n])$, the classification of the fat points and point modules is determined by \cite[Proposition 9]{LeBruynCentral} on the condition that $A(M)$ is AS regular. For a maximal graded prime ideal $p$ of $\C[y_1,\ldots,y_n]$, let $M(p)$ be the symmetric matrix one gets after specialization with respect to $p$.
\begin{proposition}
\label{prop:projcliffod}
Let $Y = \wis{Proj}(\C[y_1,\ldots,y_n]) = \PP^{n-1}$ and let
$$Y_k = \{p \in Y | \rank M(p) = k\} \subset Y.$$
Then for each $p \in Y$, there exists a unique graded prime ideal $P$ of $A(M)$ such that $A(p)=A(M)/P \otimes_{\C[y]} \C[y,y^{-1}]$ has the following structure:
\begin{itemize}
\item If $p \in Y_k$ with $k$ odd, then we have the isomorphism
$$
A(p) \cong \wis{M}_{2^{\frac{k-1}{2}}}(\C[x,x^{-1}])(\underbrace{0,0,\ldots,0}_{2^{\frac{k-1}{2}}})
$$
and $\deg(x) = 1$. This implies that there is 1 fat point of multiplicity $2^{\frac{k-1}{2}}$.
\item If $p \in Y_k$ with $k$ even, the isomorphism becomes
$$
A(p) \cong \wis{M}_{2^{\frac{k}{2}}}(\C[y,y^{-1}])(\underbrace{0,0,\ldots,0}_{2^{\frac{k}{2}-1}},\underbrace{1,1,\ldots,1}_{2^{\frac{k}{2}-1}})
$$
and $\deg(y)=2$. This implies that there are 2 fat points of multiplicity $2^{\frac{k}{2}-1}$.
\end{itemize}
\end{proposition}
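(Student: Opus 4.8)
The plan is to reduce everything to the cited structure theorem \cite[Proposition 9]{LeBruynCentral} together with the classical representation theory of complex Clifford algebras recalled above, the only genuine work being the bookkeeping of the $\Z$-grading. First I would pin down the unique graded prime $P$. Since $A(M)$ is a free module of rank $2^n$ over the central polynomial subring $\C[y_1,\ldots,y_n]$, every graded prime ideal of $A(M)$ contracts to a graded prime of $\C[y_1,\ldots,y_n]$, that is, to a point of $Y=\PP^{n-1}$. Fixing $p\in Y$ and passing to $A(M)/\mathfrak{p}A(M)$, with $\mathfrak{p}\subset\C[y_1,\ldots,y_n]$ the corresponding graded prime, one obtains after the localisation in the definition of $A(p)$ a graded Clifford algebra over a Laurent ring whose ungraded fibre is governed entirely by the symmetric scalar matrix $M(p)$ obtained by specialising the $y_i$ to the coordinates of $p$. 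Because the structure theorem will exhibit this localisation as a graded matrix algebra over a Laurent ring, it is graded prime, so there is exactly one graded prime $P$ of $A(M)$ pulling back to $p$, and $A(p)=A(M)/P\otimes_{\C[y]}\C[y,y^{-1}]$ is the object we must describe.

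Next I would determine the ungraded isomorphism type of $A(p)$ from $\rank M(p)=k$ using the classical theory recalled before the statement: a nondegenerate symmetric form of rank $k$ over $\C$ has Clifford algebra $\wis{M}_{2^{k/2}}(\C)$ when $k$ is even (central simple) and $\wis{M}_{2^{(k-1)/2}}(\C)\times\wis{M}_{2^{(k-1)/2}}(\C)$ when $k$ is odd. Since $A(M)$ is AS regular by hypothesis, \cite[Proposition 9]{LeBruynCentral} applies and upgrades this ungraded description to a graded one, producing in each case a matrix algebra over a Laurent ring in a single central variable, together with a shift vector recording how the grading (in which each $y_i$ has degree $2$) threads through the Clifford structure. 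Concretely, carrying the grading through the specialisation is what converts $\det M$, equivalently the degree-$n$ central element $g$ when $n$ is odd, into the central Laurent variable and fixes its degree.

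The remaining step is to read the fat-point data off the graded matrix algebra. For $k$ odd one obtains $\wis{M}_{2^{(k-1)/2}}(\C[x,x^{-1}])$ with $\deg x=1$ and trivial shift vector $(0,\ldots,0)$; here the degree-$1$ variable links all rows and columns into a single graded-simple module, so there is exactly one fat point, of multiplicity $2^{(k-1)/2}$. For $k$ even one obtains $\wis{M}_{2^{k/2}}(\C[y,y^{-1}])$ with $\deg y=2$ and shift vector $(0,\ldots,0,1,\ldots,1)$ split evenly; because the base variable has even degree while the shift is by $1$, the graded module decomposes along the two parity classes of the shift into two graded-simple summands, giving two fat points each of multiplicity $2^{k/2-1}$. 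Translating ``graded-simple summand'' into ``simple object of $\wis{Proj}(A)$'' via the standard dictionary then yields the stated counts and multiplicities.

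I expect the parity crossover to be the main obstacle: the affine picture recalled above has \emph{two} ungraded simples when $k$ is odd and \emph{one} when $k$ is even, whereas the projective statement has \emph{one} fat point when $k$ is odd and \emph{two} when $k$ is even. Reconciling this requires computing the centre of $A(p)$ precisely and showing that the grading produces a central variable of degree $1$ in the odd-rank case, gluing the two affine simples into one graded object, but only of degree $2$ in the even-rank case, splitting the single affine simple into two. This is exactly the content that \cite[Proposition 9]{LeBruynCentral} is invoked to supply, so the crux of a self-contained argument would be to verify its hypotheses, above all AS regularity, and to match its normalisation of the shift vector with the degree conventions $\deg x=1$ and $\deg y=2$ used here.
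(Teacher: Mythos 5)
Your proposal is correct and takes essentially the same route as the paper: the paper gives no independent argument for Proposition \ref{prop:projcliffod}, but imports it directly from \cite[Proposition 9]{LeBruynCentral} under the stated AS-regularity condition, which is exactly the reduction you carry out. Your additional bookkeeping --- graded simplicity of the matrix algebra over the graded field $\C[x,x^{-1}]$ (resp.\ $\C[y,y^{-1}]$) giving uniqueness of $P$, and the parity crossover between the affine count of simples and the graded count of fat points --- is a faithful expansion of precisely what that citation is invoked to supply.
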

It is easy to see now that the points $p \in \PP^{n-1}$ for which there are point modules in $\wis{Proj}(A)$ are determined by $Y_2$. If we let $X_k = \bigcup_{i=0}^k Y_i$, then this proposition shows that the point modules of $\wis{Proj}(A)$ determine a 2-to-1 cover of $X_2$, with ramification over $X_1$.
\par We will now show an example to determine the representations of 3-dimensional Sklyanin algebras and points of order 2.
\begin{example}
The 3-dimensional Sklyanin algebras associated to points of order 2 correspond to quotients $A_t$ of the algebra $\C \langle x,y,z\rangle$ by the relations
$$
\begin{cases}
yz+zy = t x^2,\\
zx+xz = t y^2,\\
xy+yx = t z^2,
\end{cases}
$$
for $t\in \C \setminus \{0,2,2\omega,2\omega^2,-1,-\omega,-\omega^2\}$, $\omega$ a primitive 3rd root of unity. The associated symmetric matrix over $\C[x^2,y^2,z^2]$ is
$$
M = \begin{bmatrix}
2x^2 & tz^2 & ty^2\\ tz^2 & 2y^2 & tx^2 \\ ty^2 & t x^2 & 2 z^2
\end{bmatrix}.
$$
It follows that the center of $A_t$ is equal to $\C[x^2,y^2,z^2,g]$ and 1 relation of the form $g^2 = \det(M)$. The equation $\det(M)$ determines an elliptic curve $E$ in $\PP^2_{[x^2,y^2,z^2]}$. The point modules can be found by putting $g = 0$, from which it follows that there are 2 point modules lying over the elliptic curve $\wis{Proj}(\C[x^2,y^2,z^2]/(\det(M))$.
\end{example}
\begin{remark}
These fat points and point modules correspond to $\C^* \times \wis{PGL}_n(\C)$-orbits in $\wis{rep}^{ss}_n(A)$, cfr. \cite{DeLaetLeBruyn}. Here for a graded algebra $A$ which is a finite module over its center, we define
$$
\wis{rep}^{ss}_n	(A) = \{\xymatrix{A \ar[r]^-\phi & \wis{M}_n(\C)}| \exists z \in Z(A)_k, k > 0: \phi(z) \neq 0 \},
$$
see \cite{BocklandtSymens} and \cite{DeLaetLeBruyn} for more information.
\end{remark}
\section{Shioda's modular surface $S(p)$}
\label{sec:Shioda}
We will need to know the construction and exceptional fibers of Shioda's elliptic modular surface. Although this surface can be constructed for each $n \geq 3$, we will only consider the case $n = p$ prime.
\begin{mydef}
\textit{Shioda's modular surface} is an elliptic surface over the modular curve $X(p)$. Let $\omega$ be any primitive $p$th root of unity. The fibers over the points of $X'(p)$ (the modular curve minus the cusps) parametrize elliptic curves with level $p$ structure, that is, elliptic curves $(E,O)$ and embeddings
$$\xymatrix{\Z_p \times \Z_p \ar@{^{(}->}[r] & E}
$$ such that the following diagram commutes
\begin{displaymath}
\xymatrix{\Z_p \times \Z_p \ar[r] \ar[dr]^-{[,]} & E[p] \ar[d]^-{\langle , \rangle} \\
  & \Z_p \cong \mu_p }
\end{displaymath}
The bracket $[-,-]$ is an antisymmetric $\Z_p$-bilinear form defined by $[(1,0),(0,1)] = \omega$ and $\langle -,- \rangle$ is the Weil pairing on $E[p]$ (cfr. \cite{Silverman}, III.8).
\end{mydef}
The exceptional fibers of $S(p)$ lie above the cusps of $X(p)$. Each exceptional fibre is the union of $p$ lines, each line intersecting exactly 2 other lines.
\par $S(p) \subset \PP^{p-1} \times X(p)$ and therefore comes with 2 natural projection maps, $\pi_1$ and $\pi_2$. For each fiber $\pi_2^{-1}(x), x \in X(p)$, the projection map $\pi_1$ is injective. From the construction of $S(p)$ in \cite{barth1985projective}, it follows that the Heisenberg group acts on each fiber (with the center acting trivially) and that $\wis{SL}_2(p)$ acts on the set of fibers. More importantly, the projection map 
$$\xymatrix{S(p) \ar[r]^-{\pi_1} &\PP^{p-1}}
$$
is a map of $\Z_p \times \Z_p \rtimes \wis{SL}_2(p)$-sets.
\begin{proposition}
Any cycle of $p$ lines that forms an $H_p$-orbit of lines can be sent to any other such cycle using the $\wis{SL}_2(p)$-action. 
\end{proposition}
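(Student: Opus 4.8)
The plan is to identify the cycles of $p$ lines with the exceptional fibers of $S(p)$ and then reduce the assertion to transitivity of the $\wis{SL}_2(p)$-action on the cusps of $X(p)$. First I would record the geometric dictionary. By construction each exceptional fiber is a union of $p$ lines forming a $p$-gon, and these $p$ lines form a single $H_p$-orbit: since the center of $H_p$ acts trivially this is really a $\Z_p \times \Z_p$-orbit of size $p$, so each line has an order-$p$ stabilizer, which (as $\Z_p \times \Z_p$ is abelian) is common to all $p$ lines. Conversely, the cycles of $p$ lines forming an $H_p$-orbit that we must consider are exactly these exceptional fibers. Because the projection $\pi_1 \colon S(p) \to \PP^{p-1}$ is injective on each fiber and is a map of $\Z_p \times \Z_p \rtimes \wis{SL}_2(p)$-sets, it suffices to prove that $\wis{SL}_2(p)$ permutes the exceptional fibers transitively; transitivity of the induced cycles in $\PP^{p-1}$ then follows by equivariance of $\pi_1$.

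Next, since the exceptional fibers lie above the cusps of $X(p)$ and the second projection $\pi_2$ intertwines the $\wis{SL}_2(p)$-action on $S(p)$ with the $\wis{PSL}_2(p) = \Gamma/\Gamma(p)$-action on the base $X(p)$, the set of fibers is in $\wis{SL}_2(p)$-equivariant bijection with the set of cusps. I have thus reduced the problem to showing that $\wis{PSL}_2(p)$ acts transitively on the cusps of $X(p)$. For this I would invoke the standard description: the cusps are the $\Gamma(p)$-orbits on $\PP^1(\mathbb{Q})$, parametrized by the $\pm$-classes of nonzero column vectors $(a,c) \in \Z_p \times \Z_p$, giving $\tfrac{p^2-1}{2}$ cusps in all, matching the number of exceptional fibers. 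The group $\wis{SL}_2(p)$ acts on such vectors by the standard linear action and is transitive on $(\Z_p \times \Z_p)\setminus\{0\}$; passing to $\pm$-classes, $\wis{PSL}_2(p)$ acts transitively on the cusps, and composing with the previous step yields transitivity on the fibers and hence on the cycles.

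The step requiring the most care is the first one: justifying that the cycles of $p$ lines forming $H_p$-orbits are precisely the exceptional fibers of $S(p)$ (so that no spurious cycles intervene) and tracking equivariance through $\pi_1$. The transitivity on cusps is classical, but one must fix conventions so that the count $\tfrac{p^2-1}{2}$ is consistent on both sides. It is worth checking compatibility with the earlier bijection $\PP^1_{\Z_p} \leftrightarrow \{\Z_p \times \Z_p\text{-orbits with nontrivial stabilizer}\}$: assigning to each cycle the common order-$p$ stabilizer $s(c)$ of its lines gives an $\wis{SL}_2(p)$-equivariant map $c \mapsto s(c)$ from the set of cycles to $\PP^1_{\Z_p}$, and this map is $\tfrac{p-1}{2}$-to-$1$, exactly mirroring the forgetful map from cusps $\pm(a,c)$ to the line $[a:c] \in \PP^1(\Z_p)$. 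This shows why that earlier bijection alone does not suffice: transitivity merely on $\PP^1_{\Z_p}$ moves between cycles with distinct stabilizers, whereas the transitivity of $\wis{SL}_2(p)$ on nonzero vectors (equivalently on cusps) is the genuine strengthening needed to move between the $\tfrac{p-1}{2}$ cycles sharing a fixed stabilizer.
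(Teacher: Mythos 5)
Your strategy --- identify the $H_p$-orbit cycles with the exceptional fibers of $S(p)$, transport everything along the equivariant projections $\pi_1,\pi_2$, and invoke transitivity of $\wis{PSL}_2(p)$ on the cusps of $X(p)$ --- is genuinely different from the paper's argument, but as written it has a real gap, located exactly where you flag it: the assertion that the cycles of $p$ lines forming $H_p$-orbits are \emph{precisely} the exceptional fibers. The forward inclusion is fine (the paper records that each exceptional fiber is a cycle of $p$ lines carrying an $H_p$-action). The converse --- that $\PP^{p-1}$ contains no $H_p$-stable cycle of $p$ lines other than the $\pi_1$-images of exceptional fibers --- is nowhere proved in your proposal, and it is the crux: transitivity on fibers (equivalently, on cusps) says nothing about an arbitrary cycle unless every cycle is a fiber. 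Nothing established before this proposition supplies that statement. The natural route to it is a count: there are $\frac{p^2-1}{2}$ cusps, so one would need to show there are exactly $(p+1)\frac{p-1}{2}$ cycles and that distinct cusps yield distinct cycles. But that count is obtained in the paper only \emph{after} the proposition, from the analysis inside its proof: one must show that the vertices of any such cycle form a $\Z_p\times\Z_p$-orbit with nontrivial stabilizer (so there are $p+1$ possible vertex sets, by the earlier bijection with $\PP^1_{\Z_p}$), and that each vertex set supports exactly $\frac{p-1}{2}$ cycles, indexed by the unique $1\leq k\leq\frac{p-1}{2}$ such that the line through $y_0$ and $e_1^k(y_0)$ lies in the cycle. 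Once you have done that work, the detour through $S(p)$ is superfluous; the alternative would be to cite a classification of Heisenberg-invariant degenerate curves from Barth--Hulek, which you do not do.

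For comparison, the paper's proof stays inside $\PP^{p-1}$ and is short precisely because it never needs the dictionary: it first uses the $\wis{SL}_2(p)$-equivariant bijection $\PP^1_{\Z_p}\leftrightarrow\{\Z_p\times\Z_p\text{-orbits with nontrivial stabilizer}\}$, together with transitivity of $\wis{SL}_2(p)$ on $\PP^1_{\Z_p}$, to move the given cycle so that its vertex set is the $H_p$-orbit of $y_0=(1:0:\ldots:0)$, and then applies the diagonal element $\diag(k^{-1},k)\in\wis{SL}_2(p)$, which stabilizes that vertex set, to carry the cycle with parameter $k$ to the standard one. This second step is exactly the ``genuine strengthening'' your closing remark correctly identifies as necessary --- transitivity of the stabilizer of a point of $\PP^1_{\Z_p}$ on the $\frac{p-1}{2}$ cycles sharing a vertex set --- but the paper proves it by an explicit computation, whereas your proposal only observes that it would follow from the unproven dictionary.
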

\begin{proof}
Using the $\wis{SL}_p(2)$-action, we can assume that our cycle of $p$ lines goes through the point $y_0 = (1:0:0:\ldots:0)$, as the $H_p$-orbit of $y_0$ is the same as the fixed points of $e_2$. Then there exists a unique $1\leq k \leq \frac{p-1}{2}$ such that this cycle contains the line through $y_0$ and $e_1^k(y_0)$. Take now the element
$$N=
\begin{bmatrix}
k^{-1} &0 \\ 0 & k
\end{bmatrix} \in \wis{SL}_2(p).
$$
Then under the new action of $\Z_p \times \Z_p$ defined by $n$, we have $y'_1 = (e'_1)^{-1} y_0 = e_1^k y_0$ and so our cycle has become the cycle of $p$ lines with one of the lines through $y_0$ and $y'_1$.
\end{proof}
There are a total of $(p+1)\frac{p-1}{2}$ such cycles of $p$ lines in $\PP^{p-1}$, which is equal to the number of cusps of $X(p)$ as expected.
\par  The center of $\wis{SL}_2(p)$ acts on $\PP^{p-1}$ by the involution $\phi$ determined by $x_i \leftrightarrow x_{-i}$, indices taken $\bmod p$, cfr. \cite{fisher2009pfaffian}. This involution determines the inverse map on the elliptic curves embedded in $\PP^{p-1}$.
\begin{proposition}
The points of order 2 on an elliptic curve $E$ correspond to eigenspaces of eigenvalue 1 of $\phi$.
\end{proposition}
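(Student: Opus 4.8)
The plan is to treat $\phi$ as a genuine linear involution on $V$ and read the statement off from its eigenspace decomposition. Writing $V = V_+ \oplus V_-$ for the $(\pm1)$-eigenspaces of $x_i \mapsto x_{-i}$, I would note $\dim V_+ = \frac{p+1}{2}$ (spanned by $x_0$ and the $x_i + x_{-i}$) and $\dim V_- = \frac{p-1}{2}$ (spanned by the $x_i - x_{-i}$), so that the fixed locus of $\phi$ on $\PP^{p-1}$ is the disjoint union of the linear subspaces $\PP(V_+)$ and $\PP(V_-)$. Since $\phi$ restricts to $[-1]$ on $E$, its fixed points on $E$ are exactly the four $2$-torsion points $E[2] = \{O, T_1, T_2, T_3\}$, and each of them must therefore land in $\PP(V_+)$ or in $\PP(V_-)$. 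The whole proposition thus becomes the problem of sorting these four points between the two eigenspaces.

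First I would record the sorting criterion. The embedding is given by $L = \mathcal{O}(D)$ with $D = \sum_{j=0}^{p-1}[j]P_2$ the sum over the points of $\langle P_2\rangle$; this divisor satisfies $[-1]^*D = D$, so $L$ is symmetric and $\phi$ acts on each fibre $L_P$ over a fixed point by a sign $\epsilon_P \in \{+1,-1\}$. Evaluating the identity $\phi(s) = \pm s$ for $s \in V_\pm$ at a fixed point $P$ shows that all odd sections vanish at $P$ precisely when $\epsilon_P = +1$, and all even sections vanish precisely when $\epsilon_P = -1$; hence $P \in \PP(V_+) \Leftrightarrow \epsilon_P = +1$ and $P \in \PP(V_-) \Leftrightarrow \epsilon_P = -1$. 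So it suffices to compute the four signs $\epsilon_P$.

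For the count I would apply the holomorphic Lefschetz fixed point formula to $[-1]$ acting on $(E,L)$. Since $H^0(E,L) = V$ and $H^1(E,L) = 0$, the left-hand side equals $\dim V_+ - \dim V_- = 1$, while each of the four fixed points contributes $\epsilon_P/\det(\mathrm{id} - d[-1]_P) = \epsilon_P/2$. Hence $\sum_{P \in E[2]} \epsilon_P = 2$, which forces exactly three of the signs to equal $+1$ and one to equal $-1$. Consequently three of the four $2$-torsion points lie in $\PP(V_+)$ and a single one lies in $\PP(V_-)$.

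It remains to identify the exceptional point as the origin, and this is where I expect the real obstacle, since the individual signs $\epsilon_P$ depend on the symmetric linearization and not merely on the divisor class. Here I would invoke Mumford's description of $\epsilon_P$ through the multiplicity of the symmetric divisor $D$: because $\operatorname{mult}_O D = 1$ while $\operatorname{mult}_{T_k} D = 0$ (a point of order $2$ is not $p$-torsion, hence not in $\langle P_2\rangle$), the sign at $O$ is opposite to the common sign at $T_1, T_2, T_3$. Combined with the Lefschetz count this pins down $\epsilon_O = -1$ and $\epsilon_{T_1} = \epsilon_{T_2} = \epsilon_{T_3} = +1$, so that $O \in \PP(V_-)$ while the three points of order $2$ are precisely $E \cap \PP(V_+)$, the eigenvalue-$1$ eigenspace. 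As a consistency check one can verify the whole picture explicitly in the Hesse model for $p=3$, where $\PP(V_-)$ is the single point $(0:1:-1)$, which is exactly the unique flex of $\langle P_2\rangle$ fixed by $x_1 \leftrightarrow x_2$, namely $O$.
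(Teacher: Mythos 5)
Your argument is correct, but it takes a genuinely different and considerably heavier route than the paper's. The paper's proof is elementary: the section $x_0$ has divisor $D = P_{0,0} + \cdots + P_{0,p-1}$, the reduced sum of the points of $\langle P_2\rangle$, so the hyperplane $\mathbf{V}(x_0)$ meets the degree-$p$ curve $E$ exactly in $O$ and $p-1$ points of order $p$ (a hyperplane can meet $E$ in at most $p$ points); hence every point of order $2$ has nonzero $x_0$-coordinate, and since such a point is fixed by $\phi$ while every $(-1)$-eigenvector has vanishing $x_0$-coordinate, its eigenvalue must be $+1$. (Fisher is quoted only to place $O$ in the $(-1)$-eigenspace.) You instead decompose $V = V_+ \oplus V_-$, use the holomorphic Lefschetz fixed point formula to get $\sum_{P \in E[2]} \epsilon_P = 2$, i.e.\ a three-to-one split of the signs, and then invoke Mumford's multiplicity formula for symmetric divisors ($\epsilon_P$ governed by $\operatorname{mult}_P D \bmod 2$) to conclude that $O$ carries the opposite sign from the three points of order $2$. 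Note that both proofs ultimately rest on the same geometric input --- $D$ contains $O$ with multiplicity $1$ and contains no point of order $2$ --- but you process it through Atiyah--Bott and Mumford, whereas the paper reads it off directly from the coordinates. Your route buys independence from Fisher's result (you re-derive the location of $O$ rather than citing it), an intrinsic count of how the $2$-torsion distributes over $\PP(V_+)$ and $\PP(V_-)$, and a method that generalizes to symmetric embeddings of abelian varieties; the paper's route buys brevity and elementarity, needing nothing beyond intersection counting and linear algebra.
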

\begin{proof}
According to \cite[Proposition 3.7]{fisher2009pfaffian}, the point $O \in E$ corresponds to an eigenvector with eigenvalue $-1$ of $\phi$. $O$ lies on the hyperplane $\mathbf{V}(x_0)$ and the other $p-1$ points of $E$ that lie on this hyperplane are points of order $p$. As a hyperplane can intersect $E$ in maximal $p$ points, it follows that the first coordinate of any point of order 2 on $E$ is not 0. As such a point still has to be fixed by $\phi$, it follows that it's eigenvalue is 1.
\end{proof}
\subsection{The case $p=5$}
The following subsection is a summary of Chapter \Rmnum{4}, Section 5 of \cite{hulek}. This gives a concrete example of Shioda's elliptic modular surface for $p=5$. Recall the representation $V_1$ of $H_5$ of Section \ref{sec:Heis} and let $\PP^4=\PP(V_1)$ be the associated projective space on which $\Z_5 \times \Z_5$ acts by way of the composition
$$
\xymatrix{H_5 \ar[r] & \wis{GL}(V) \ar[r] & \wis{PGL}(V)}.
$$
\begin{theorem}
Every elliptic curve $(E,O)$ can be embedded in $\PP^4$ in such a way that $O$ is sent to $O_a=(0:1:a:-a:-1)$ for some $a \in \C$ and such that the action of $\Z_5 \times \Z_5$ on $\PP^4$ is an extension of the action of $E[5]$ on $E$. Call this embedded curve $C_a \subset \PP^4$ with $O=O_a$. The relations of $C_a$ are given by $ax_i^2 + a^2 x_{i+1}x_{i-1} - x_{i+2}x_{i-2}$, $ 0 \leq i \leq 4$, indices taken in $\Z_5$. 
\end{theorem}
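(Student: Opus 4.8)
The plan is to build the embedding from the complete linear system of a divisor of degree $5$ and then to extract both the image of $O$ and the defining quadrics purely from the $H_5$-action. First I would take $D = P_{0,0} + \cdots + P_{0,4}$, a divisor of degree $5$ on $E$; by Riemann--Roch $\dim H^0(E,\mathcal{O}(D)) = 5$, so the associated morphism is a closed embedding $\iota \colon E \hookrightarrow \PP^4 = \PP(V_1)$. The construction of Section \ref{sub:Mod} equips $H^0(E,\mathcal{O}(D))$ with exactly the $H_5$-module structure of $V_1$, with $e_1,e_2$ acting as described there, and the induced action on $\PP(V_1)$ realizes the translation action of $E[5] \cong \Z_5 \times \Z_5$ on $\iota(E)$. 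Hence $\iota$ is $H_5$-equivariant, which is precisely the assertion that the $\Z_5 \times \Z_5$-action on $\PP^4$ extends the action of $E[5]$ on $E$.

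To locate $O$, I would use that the inversion $P \mapsto -P$ fixes $O$ and is induced on $\PP^4$ by the involution $\phi \colon x_i \leftrightarrow x_{-i}$. By \cite{fisher2009pfaffian}, as recalled above, the point $\iota(O)$ lies in the $(-1)$-eigenspace of $\phi$. A direct computation identifies this eigenspace with the projective line $\{(0:c_1:c_2:-c_2:-c_1)\}$, so after scaling $c_1 = 1$ and writing $a = c_2/c_1$ one gets $\iota(O) = (0:1:a:-a:-1) = O_a$, the boundary value $c_1 = 0$ being excluded as a cusp.

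For the relations I would argue representation-theoretically. Since $z$ acts on every quadratic monomial by $\omega^2$, the symmetric square decomposes as $S^2 V_1 \cong V_2^{\oplus 3}$, the three copies being generated as $H_5$-modules by the $e_2$-invariant vectors $x_0^2$, $x_1 x_4$ and $x_2 x_3$. Because a line bundle of degree $5$ on an elliptic curve is projectively normal, the degree-$2$ multiplication map is surjective and $\dim I_2 = 15 - \dim H^0(E,\mathcal{O}(2D)) = 15 - 10 = 5$; thus the degree-$2$ part $I_2$ of the ideal of $C_a$ is an $H_5$-submodule isomorphic to a single copy of $V_2$. Writing its $e_2$-invariant generator as $r_0 = \alpha x_0^2 + \beta x_1 x_4 + \gamma x_2 x_3$ and translating by $e_1$ gives $r_i = \alpha x_i^2 + \beta x_{i+1}x_{i-1} + \gamma x_{i+2}x_{i-2}$. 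Now imposing that $O_a$ lie on $C_a$ forces every $r_i$ to vanish at $O_a$: the equation $r_0(O_a) = 0$ reads $\beta + \gamma a^2 = 0$ and $r_1(O_a) = 0$ reads $\alpha + \gamma a = 0$, so $(\alpha:\beta:\gamma) = (a:a^2:-1)$ and the relations become exactly $a x_i^2 + a^2 x_{i+1}x_{i-1} - x_{i+2}x_{i-2}$.

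The main obstacle is therefore not the coefficient computation, which drops out of the single geometric condition $O_a \in C_a$, but the structural input that the homogeneous ideal of the elliptic normal quintic is genuinely generated by this copy of $V_2$ — that is, projective normality together with generation of the ideal in degree $2$, which for degree $\geq 4$ is classical but must be invoked in order to conclude $C_a = \mathbf{V}(r_0,\ldots,r_4)$. The remaining point is surjectivity: every $(E,O)$ carrying a compatible level-$5$ structure arises from this construction and so appears as some $C_a$; as $a$ varies it traces out a coordinate on $X(5)$ with the cusps removed, recovering the whole family and matching the count of exceptional fibres of $S(5)$.
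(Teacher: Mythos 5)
Your proposal is correct, but it is not the route the paper takes, because the paper takes no route at all: this theorem is stated without proof, the whole subsection being explicitly a summary of Chapter \Rmnum{4}, Section 5 of \cite{hulek}, where the equations of the curves $C_a$ are obtained from the classical theory of Heisenberg-invariant elliptic normal quintics. What you give instead is a self-contained, representation-theoretic derivation: the decomposition $S^2V_1 \cong V_2^{\oplus 3}$ with $e_2$-invariant generators $x_0^2$, $x_1x_4$, $x_2x_3$, the count $\dim I_2 = 15-10 = 5$ via projective normality forcing $I_2 \cong V_2$, and the two incidence conditions $r_0(O_a)=r_1(O_a)=0$ pinning the coefficients to $(\alpha:\beta:\gamma)=(a:a^2:-1)$. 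This is very much in the spirit of the paper's own machinery of $G$-algebras (it amounts to saying that $\mathcal{O}(C_a)$ is an $H_5$-algebra whose quadratic relations are a single $H_5$-embedding of $V_2$ into $V_2^{\oplus 3}$), and it buys a proof that never touches theta identities; the price is the two classical inputs you correctly flag -- normal generation and quadric generation of the ideal of an elliptic normal quintic -- together with Fisher's identification of $\iota(O)$ as a $(-1)$-eigenvector of $\phi$, which the paper itself also quotes from \cite{fisher2009pfaffian}. The one step you assert without closing is that $c_1 \neq 0$, i.e.\ that $\iota(O) \neq (0:0:1:-1:0)$; this does require an argument, but your own computation supplies it: vanishing of $r_0,\ldots,r_4$ at that point forces $(\alpha:\beta:\gamma)=(0:1:0)$, so $I_2$ would be spanned by the monomials $x_{i+1}x_{i-1}$, whose common zero locus is a cycle of five lines and cannot contain the irreducible quintic $\iota(E)$. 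With that one line added, your argument is a complete and independent proof of the statement the paper only cites.
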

Let $X'(5)$ be the modular curve parametrizing elliptic curves with level $5$ structure. Let $X(5)$ be the natural compactification of $X'(5)$. Then the surface $S(5) \subset \PP^4_{[x_0:\ldots:x_4]} \times \PP^1_{[A:B]}$ defined by the relations $AB x_i^2 + A^2 x_{i+1}x_{i-1} - B^2 x_{i+2}x_{i-2}$, $ 0 \leq i \leq 4$, indices taken in $\Z_5$ is Shioda's elliptic modular surface $S(5)$.
\par  $X(5) \cong \PP^1$ and the map between $S(5)$ and $\PP^1$ is determined by the projection $\pi_2$ on the second factor. $\wis{PSL}_2(5)$ acts on $\PP^1$ such that points belong to the same orbit if and only if the corresponding fibers of $\pi_2$ in $S(5)$ are isomorphic as varieties. For every point $p \in X(5)$ the fiber $C_p = \pi_2^{-1}(p)$ is an elliptic curve, except for the $\wis{PSL}_2(5)$-orbit of $0$. For the $\wis{PSL}_2(5)$-orbit of 0, the fiber is a cycle of 5 lines, each line intersecting exactly 2 other lines.
\begin{theorem}
The projection $\pi_1$ of $S(5)$ to $\PP^4$ is a determinantal surface $S_{15}$, defined by taking the $3 \times 3$-minors of the matrix
$$
\begin{bmatrix}
x_0^2   & x_1^2   & x_2^2   & x_3^2   & x_4^2 \\
x_2 x_3 & x_3 x_4 & x_4 x_0 & x_0 x_1 & x_1 x_2 \\
x_1 x_4 & x_2 x_0 & x_3 x_1 & x_4 x_2 & x_0 x_3 \\
\end{bmatrix}.
$$
This projection is $1$-to-$1$ except for the 30 points of $\PP^4$ with a non-trivial stabilizer in $H_5$, for these points $\pi_1$ is $2$-to-$1$. These 30 points are the singular points of $S_{15}$ and the projection map $\xymatrix{S(5) \ar[r] & S_{15}}$ is a desingularization of these points.
\end{theorem}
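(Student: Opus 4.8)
The plan is to turn the elimination of $[A:B]$ into linear algebra over $\PP^4$ and then read the fibre structure off the geometry of a conic in $\PP^2$. Setting $v=(A^2,AB,B^2)$ and $a_i=(x_{i+1}x_{i-1},x_i^2,-x_{i+2}x_{i-2})$ for $i\in\Z_5$, each defining relation of $S(5)$ is exactly $a_i\cdot v=0$. Assembling the $a_i$ as the rows of a $5\times3$ matrix $Q$ — whose maximal minors agree, up to sign and reordering, with the $3\times3$ minors of the matrix $N$ in the statement — the fibre over $[x]$ becomes $\pi_1^{-1}([x])\cong C\cap\PP(\ker Q)$, where $C\subset\PP^2$ is the Veronese conic $\{v_1^2=v_0v_2\}$ and $[A:B]\mapsto(A^2:AB:B^2)$ is the isomorphism $\PP^1\xrightarrow{\sim}C$. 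Since $\rank Q=\rank N$, a point can lie in the image only if $\ker Q\neq0$, i.e. only if $\rank N\le2$; this already gives the easy inclusion $S_{15}\subseteq D:=\mathbf{V}(3\times3\text{ minors of }N)$.

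The reverse inclusion is the crux. At a point of $D$ with $\rank N=2$ the set $\PP(\ker Q)$ is a single point $[w]$, where $w$ spans $\ker Q$ and is given, up to sign and reordering, by the $2\times2$ minors of two independent columns of $N$, hence by quartics in the $x_i$; the point lies in the image precisely when $[w]\in C$, that is when $w_1^2-w_0w_2=0$. I would prove that this octic vanishes identically on $D$, so that $D$ and the image coincide. Exploiting the $(\Z_5\times\Z_5)\rtimes\wis{SL}_2(5)$-equivariance of $N$ reduces this to checking one representative of each orbit type, a finite verification; alternatively one shows that $D$ is itself irreducible of dimension $2$, whence the two irreducible surfaces $S_{15}\subseteq D$ must be equal. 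I expect this to be the main obstacle, since it is exactly where the highly non-generic nature of $N$ enters — a generic $3\times5$ matrix of quadrics would cut out a curve, not a surface.

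Granting $S_{15}=D$, the fibre dichotomy is immediate from the conic picture: where $\rank N=2$ the fibre is the single Veronese point $[w]$, so $\pi_1$ is $1$-to-$1$, while where $\rank N=1$ the set $\PP(\ker Q)$ is a line in $\PP^2$, meeting $C$ in two points, so $\pi_1$ is $2$-to-$1$. It then remains to identify $\{\rank N\le1\}$ with the $30$ points of nontrivial $H_5$-stabiliser. I would note that this locus is $(\Z_5\times\Z_5)\rtimes\wis{SL}_2(5)$-invariant, verify by hand that the fixed point $(1:0:0:0:0)$ of $e_2$ makes $N$ rank $1$, and then invoke the orbit count recalled earlier: the six order-$5$ subgroups of $\Z_5\times\Z_5$ yield six orbits of five fixed points each, $30$ in all, and one checks these exhaust the rank-$\le1$ locus.

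Finally, $S(5)$ is Shioda's elliptic modular surface and is smooth, and by the above $\pi_1$ is finite and birational, an isomorphism over the rank-$2$ locus. Over each of the $30$ points the two preimages lie over two distinct cusps of $X(5)$: each such point is a common vertex of exactly two of the twelve degenerate pentagon-fibres (the incidence count is $12\times5=30\times2$), so two smooth sheets of $S(5)$ meet above it and their images cross in an ordinary double point of $S_{15}$. Hence $S_{15}$ is singular precisely at these $30$ points, and $\pi_1$, pulling the two sheets apart, is the desingularisation.
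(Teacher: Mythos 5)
Your linear-algebra setup is correct and clean: writing each relation as $a_i\cdot v=0$ with $v=(A^2,AB,B^2)$, identifying the fibre $\pi_1^{-1}([x])$ with $C\cap\PP(\ker Q)$ for the Veronese conic $C$, and noting that the maximal minors of $Q$ are, up to sign, the $3\times3$ minors of $N$, does give the inclusion of the image in the determinantal locus $D$, and (once equality is known) the fibre dichotomy and the identification of the $2$-to-$1$ locus with the $30$ stabilizer points. (For the record, there is no proof in the paper to compare against: the subsection is explicitly a summary of Chapter~\Rmnum{4}, Section 5 of Hulek's book, so the theorem is imported, not proved.) The genuine gap is exactly the step you flag as ``the main obstacle'', the reverse inclusion $D\subseteq S_{15}$, and neither of your two proposed routes closes it. The equivariance route is structurally insufficient: $(\Z_5\times\Z_5)\rtimes\wis{SL}_2(5)$ is a \emph{finite} group while $D$ is a surface, so $D$ is a union of infinitely many orbits; checking ``one representative of each orbit type'' is therefore not a finite verification, and equivariance of the octic $w_1^2-w_0w_2$ only cuts the problem down by a finite symmetry group --- it cannot reduce the vanishing of a polynomial on a positive-dimensional locus to finitely many point checks. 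The second route, that $D$ is irreducible of dimension $2$, would indeed finish the argument (the image is closed by properness and $2$-dimensional because every fibre is finite), but you give no argument for it, and it is precisely the non-generic content of the theorem: for a generic $3\times 5$ matrix of quadrics the locus $\rank\le 2$ has codimension $3$ in $\PP^4$, so both the dimension and the irreducibility of $D$ are what must actually be proved. A workable substitute would be an explicit ideal-theoretic computation --- show that the octic lies in the radical of the ideal of $3\times 3$ minors, or compute a primary decomposition of that ideal --- in the spirit of the Macaulay2 verifications used elsewhere in the paper; as written, this step is missing, not merely compressed.

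The closing singularity analysis also has a gap, in one direction. That $S_{15}$ is singular at the $30$ points does follow from your picture: $\pi_1$ is finite and birational, and a finite birational morphism is a local isomorphism over any point where its image is normal, so a $2$-point fibre forces non-normality, hence a singular point. But smoothness of $S_{15}$ away from the $30$ points does \emph{not} follow from bijectivity of $\pi_1$ there: a finite, birational, bijective morphism can have non-normal image (the normalization of a cuspidal curve is the standard example). You additionally need $\pi_1$ to be an immersion on the rank-$2$ locus (injective differential), or equivalently normality of the image there, and the phrase ``ordinary double point'' presupposes that the two sheets over each of the $30$ points meet with transverse tangent planes --- neither is verified. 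Finally, the claim that the locus $\rank N\le 1$ consists of exactly the $30$ points is also only asserted (``one checks''); it is a finite computation, but it too needs to be carried out.
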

Let $\phi$ be the involution on $\PP^4$ defined by $x_i \leftrightarrow x_{-i}$. We will need the points of order 2 of each elliptic curve $E$, that is, the intersection of $S_{15}$ with the plane of $\PP^4$ that corresponds to the 3-dimensional subspace of $V_1$ that is invariant $\phi$.
\begin{proposition}
The intersection of $S_{15}$ with the plane containing the 2-torsion points of $C_a$ for all $a$ such that $C_a$ is an elliptic curve is determined by the equations
$$
\begin{cases}
x_1-x_4=0,\\
x_2-x_3=0,\\
x_0^4x_1x_2-x_0^2x_1^2x_2^2-x_0(x_1^5+x_2^5)+2x_1^3x_2^3=0.
\end{cases}
$$
\end{proposition}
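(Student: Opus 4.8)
The plan is to read off the relevant plane from the involution $\phi$ and then intersect the determinantal surface $S_{15}$ with it by a direct computation.

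First I would identify the plane. By the previous proposition the points of order $2$ lie in the eigenspace of eigenvalue $1$ of $\phi$, and since $\phi$ fixes $x_0$ and interchanges $x_1 \leftrightarrow x_4$ and $x_2 \leftrightarrow x_3$, this eigenspace is the $3$-dimensional subspace $\{x_1 = x_4, \ x_2 = x_3\}$ of $V_1$. Its projectivization is the plane in $\PP^4$ defined by $x_1 - x_4 = 0$ and $x_2 - x_3 = 0$, which accounts for the first two equations. It then remains to compute the intersection of $S_{15}$ with this plane.

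Next I would substitute $x_3 = x_2$ and $x_4 = x_1$ into the $3 \times 5$ matrix defining $S_{15}$. The crucial observation is that after this substitution the columns collapse in pairs: columns $2$ and $5$ become equal, and columns $3$ and $4$ become equal, so that only three distinct columns $c_1, c_2, c_3$ survive. Consequently the restricted matrix has rank at most $2$ exactly when $c_1, c_2, c_3$ are linearly dependent.

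Finally I would translate this into a single equation. Of the ten $3\times 3$ minors of the restricted matrix, the six that repeat a column vanish identically, while the remaining four all equal $\pm\det(c_1, c_2, c_3)$; hence the restricted minors generate the same ideal as the single determinant
$$
\det\begin{bmatrix} x_0^2 & x_1^2 & x_2^2 \\ x_2^2 & x_1 x_2 & x_0 x_1 \\ x_1^2 & x_0 x_2 & x_1 x_2 \end{bmatrix}.
$$
Expanding this gives, up to an overall sign, the sextic $x_0^4 x_1 x_2 - x_0^2 x_1^2 x_2^2 - x_0(x_1^5 + x_2^5) + 2 x_1^3 x_2^3$, which is the third equation. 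The computation itself presents no real obstacle; the only point deserving care is the reduction of the rank condition, namely verifying that the minors with a repeated column vanish and that all the others are proportional to this single determinant, so that the intersection is genuinely cut out by the sextic rather than merely contained in its zero locus.
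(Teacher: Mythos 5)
Your proof is correct, and the computation checks out: after substituting $x_3 = x_2$, $x_4 = x_1$ the five columns of the $3\times 5$ matrix indeed collapse to the three columns $c_1=(x_0^2,x_2^2,x_1^2)$, $c_2=(x_1^2,x_1x_2,x_0x_2)$, $c_3=(x_2^2,x_0x_1,x_1x_2)$ (columns $2,5$ coincide, as do columns $3,4$), the six minors involving a repeated column vanish, the remaining four equal $\pm\det(c_1,c_2,c_3)$, and expanding that determinant gives exactly $-\bigl(x_0^4x_1x_2-x_0^2x_1^2x_2^2-x_0(x_1^5+x_2^5)+2x_1^3x_2^3\bigr)$. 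Where you differ from the paper is in execution rather than strategy: the paper's entire proof consists of a pointer to Hulek's book together with the remark that one can ``check by computer'' that the intersection of $\mathbf{V}(x_1-x_4,x_2-x_3)$ with $S_{15}$ is the claimed curve. Your argument replaces that black box with a hand computation, and the column-collapse observation is precisely what makes this feasible: it explains structurally why the ideal generated by the restricted minors is principal, so the intersection is genuinely the hypersurface cut out by the single sextic inside the plane (scheme-theoretically, not merely as a containment of zero loci), which is the point you rightly flag as the only delicate one. What the paper's route buys is brevity and an external reference; what yours buys is a self-contained, verifiable proof that also makes the rank-$\leq 2$ geometry of the determinantal surface transparent along this plane.
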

\label{prop:order2}
\begin{proof}
See amongst others \cite{hulek}. Alternatively, one can check by computer that the intersection of the plane $\mathbf{V}(x_1-x_4,x_2-x_3)$ with $S_{15}$ is indeed the claimed curve.
\end{proof}
\section{Constructing $G$-algebras}
\label{sec:Galg}
In order to talk about character series, one first needs to know the construction of $G$-algebras. This is a generalization of the setting considered in \cite{DeLaet}.
\begin{mydef}
Let $G$ be a reductive group. We call a positively graded connected algebra $A$, finitely generated in degree 1, a \textit{$G$-algebra} if $G$ acts on it by gradation preserving automorphisms.
\end{mydef}
This implies that there exists a representation $V$ of $G$ such that $T(V)/I \cong A$ with $I$ a graded ideal of $T(V)$, which is itself a $G$-subrepresentation of $T(V)$.
\par The general construction of quadratic $G$-algebras with relations is as follows. Let $V$ be a $G$-representation. Then $V \otimes V$ is also a $G$-representation which decomposes as a summation of simple representations, say $V \otimes V \cong \oplus_{i=1}^m S_i^{a_i}$ where the $S_i$ are distinct simple representations of $G$ and $a_i \geq 0$. A $G$-algebra $A$ is then constructed by taking embeddings of the $S_i$ in $V \otimes V$ as relations of $A$.
\par One can of course do the same for other degrees by taking relations in $T(V)_i = V^{\otimes i} \cong  \oplus_{j=1}^m S_j^{a_j}$ and take different embeddings of the simple representations of $G$ in $V^{\otimes i}$ as relations.
\begin{mydef}
Let $A$ be a $G$-algebra with corresponding ideal $I$ of $T(V)$. We call $B$ a \textit{$G$-deformation} of $A$ up to degree $k$ if $B$ is also a quotient of $T(V)$ such that $\forall 1 \leq i \leq k: A_i \cong B_i$ as $G$-representations. We will call $B$ a $G$-deformation if $\forall i \in \N: A_i \cong B_i$ as $G$-representations.
\end{mydef}
If the relations for $A$ are all of the same degree $k$, then all $G$-deformations up to degree $k$ of $A$ depend on a product of Grassmannians. For example, let $A$ be a quadratic algebra of which we want to find all $G$-deformations up to degree 2. Let $I_2 = \oplus_{i=1}^m S_i^{e_i} \subset V \otimes V =\oplus_{i=1}^m S_i^{a_i}$, then the $G$-deformations up to degree 2 are parametrized by the product $\Emb_G(\oplus_{i=1}^m S_i^{e_i},\oplus_{i=1}^m S_i^{a_i})=\prod_{i=1}^m \Grass(e_i,a_i)$.
\par In general, the total set of $G$-deformations up to degree $k$ of a $G$-algebra $A = T(V)/I$ are determined by a Zariski closed subset of 
$$
Z_k=\prod_{i=1}^k \prod_{S \text{ simple}}\Grass(e_{i},a_{i})
$$
where $I_i = \oplus_{S \text{ simple}} S^{e_i} \subset T(V)_i = \oplus_{S \text{ simple}} S^{a_i}$
\begin{mydef}
We say that a variety $Z$ \textit{parametrizes $G$-deformations up to degree $k$} of a $G$-algebra $A$ if $\xymatrix{Z \ar@{^{(}->}[r]^-\phi & Z_k}$ can be embedded in $Z_k$ and the point corresponding to $A$ in $Z_k$ belongs to the image of $\phi$. We say that $Z$ \textit{parametrizes $G$-deformations} of $A$ if $Z$ parametrizes $G$-deformations up to degree $k$ for some $k$ and for each point $x \in Z$ with corresponding algebra $A_x$, we have
$$
\forall i \in \N: (A_x)_i \cong A_i \text{ as $G$-representations}.
$$
\end{mydef}
\par We will now show 2 examples of $G$-deformations.
\begin{example}
Let $G = H_3$ and $V_1 = \C x_0 \oplus \C x_1 \oplus \C x_2$ with the action of $H_3$ defined by $e_1 \cdot x_i = x_{i-1}$ and $e_2 \cdot x_i = \omega^i x_i$, $\omega$ a primitive 3rd root of unity. It is easy to see that $V_1^* = V_2$. Then $V_1 \otimes V_1 \cong (V_2)^{\oplus 3}$, decomposed as
\begin{eqnarray}
H_3\cdot(x_1 x_2 + x_2 x_1) \oplus H_3\cdot (x_1 x_2 - x_2 x_1) \oplus  H_3 \cdot (x_0^2)
\end{eqnarray}

The subrepresentation $V_1 \wedge V_1$ is generated by $x_1 x_2 - x_2 x_1$ over $H_3$. The ideal generated by this representation is of course the relations one needs to get the polynomial ring $\C[V_1]$.
\par In order to find $H_3$-deformations up to degree 2 of $\C[x_0,x_1,x_2]$, we need to find the $H_3$-embeddings of $V_2$ into $(V_2)^{\oplus 3}$, which is determined by a vector by Schur's lemma. Such an embedding is completely determined by an element $$A(x_1 x_2 + x_2 x_1)+B(x_1 x_2 - x_2 x_1) + C x_0^2$$ with $(A:B:C) \in \Grass(1,3) \cong \PP^2$. Putting $a=A+B, b = A-B, c=C$, one gets the familiar relations for the 3-dimensional Sklyanin algebras
$$\begin{cases}
a x_1 x_2 + b x_2 x_1 + c x_0^2,\\
a x_2 x_0 + b x_0 x_2 + c x_1^2,\\
a x_0 x_1 + b x_1 x_0 + c x_2^2.
\end{cases}
$$
In fact, we will see in Theorem \ref{th:Sklychar} that, whenever $[a:b:c]$ defines a Sklyanin algebra, this algebra is an $H_3$-deformation of $\C[V_1]$.
\end{example}
\begin{example}
Apart from the 3-dimensional Sklyanin algebras, one also has the twisted coordinate ring $\mathcal{O}_\tau(E)$, which is a quotient of the Sklyanin algebra $\mathcal{A}_\tau(E)$ by a central element of degree 3. One can easily check that this element is fixed by the Heisenberg action. Therefore, the twisted coordinate rings are $H_3$-deformations of the graded coordinate ring $\mathcal{O}(E)$, where $E$ is embedded in $\PP^2$ in Hesse normal form.
\end{example}
\subsection{Character series}
\label{sec:kos}
Given a $G$-algebra $A$, it is a natural question to ask how $A$ behaves as a $G$-module. As $G$ acts as gradation preserving automorphisms, we have a decomposition
$$
A = \bigoplus_{k=0}^{\infty} \bigoplus_{S \text{ simple}} S_{e_{k,S}}
$$
with almost all $e_{k,S}$ equal to 0. We will only consider the case that $G$ is finite.
\begin{mydef}
Let $G$ be a finite group. The \textit{character series} for an element $g \in G$ and for a $G$-algebra $A$ is a formal sum 
\begin{displaymath}
Ch_A(g,t) = \sum_{n \in \mathbb{Z}} \chi_{A_n}(g) t^n.
\end{displaymath}
\end{mydef}
For example, if $g =  1$, $Ch_A(1,t) = H_A(t)$, the Hilbert series of $A$. As a character of a representation is constant on conjugacy classes, we can represent the decomposition of $A$ in simple $G$-representations as a vector of length equal to the number of conjugacy classes and on the $i$th place the character series $Ch_A(g,t)$ with $g \in C_i$, the $i$th conjugacy class.
\begin{lemma}
Let $V$ be a simple representation of $G$ and let $A$ be a $G$-algebra constructed from $T(V)$. For every element $z$ of the center, we have that $Ch_A(z,t) = H_A(\lambda t)$, where $z$ acts on $V$ by multiplication with $\lambda$.
\label{lem:cent}
\end{lemma}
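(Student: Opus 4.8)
The plan is to combine Schur's lemma with the fact that $A$ is a graded quotient of the tensor algebra. Write $A = T(V)/I$ with $I$ a graded ideal that is also a $G$-subrepresentation, so that in each degree the homogeneous piece $A_n = V^{\otimes n}/I_n$ is a quotient of $V^{\otimes n}$ as a $G$-representation. The character series in question is $Ch_A(z,t) = \sum_n \chi_{A_n}(z)\, t^n$, so it suffices to identify $\chi_{A_n}(z)$ for each $n$.

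First I would note that, $V$ being simple and $z$ central in $G$, the operator $z|_V$ commutes with the entire $G$-action and is therefore a scalar by Schur's lemma; this is precisely the hypothesis $z|_V = \lambda\cdot\mathrm{id}$. Because the $G$-action on a tensor product is diagonal, $z$ acts on a pure tensor by
$$
z\cdot(v_1\otimes\cdots\otimes v_n) = (zv_1)\otimes\cdots\otimes(zv_n) = \lambda^n\,(v_1\otimes\cdots\otimes v_n),
$$
so $z$ acts on all of $V^{\otimes n}$ as the single scalar $\lambda^n$.

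The key step is then to descend to the quotient. Since $I_n$ is a $G$-subrepresentation of $V^{\otimes n}$, it is stable under $z$, and because $z$ acts as the scalar $\lambda^n$ on $V^{\otimes n}$ it acts as the same scalar on the quotient $A_n$. Taking the trace gives $\chi_{A_n}(z) = \lambda^n\dim_\C A_n$. Assembling the series,
$$
Ch_A(z,t) = \sum_n \chi_{A_n}(z)\,t^n = \sum_n (\dim_\C A_n)\,\lambda^n t^n = \sum_n (\dim_\C A_n)(\lambda t)^n = H_A(\lambda t),
$$
which is the desired identity.

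I do not expect a serious obstacle here. The only point requiring any care is that the scalar action genuinely descends to $A_n$, which is immediate once one observes that $I$ being a $G$-subrepresentation forces $I_n$ to be $z$-stable. The substance of the lemma is the passage from the scalar $\lambda$ on $V$ to the scalar $\lambda^n$ in degree $n$, and this is exactly what turns the formal substitution $t\mapsto\lambda t$ in the Hilbert series into the character series evaluated at $z$.
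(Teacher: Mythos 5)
Your proposal is correct and follows essentially the same argument as the paper: the central element $z$ acts on $V^{\otimes n}$, and hence on the quotient $A_n$, as the scalar $\lambda^n$, which turns the character series into $\sum_n \lambda^n \dim A_n\, t^n = H_A(\lambda t)$. The paper's proof states this in one line; you simply make explicit the diagonal action on tensors and the $z$-stability of the graded ideal, which are exactly the implicit steps.
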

\begin{proof}
It follows that in degree $k$ the action of $z$ on $A_k$ is given by multiplication with $\lambda^k$, so the character series for the element $z$ in this case is given by $$Ch_A(z,t)=\sum_{k=0}^\infty \lambda^k \dim A_k t^k = H_A(\lambda t).$$ 
\end{proof}
\par If $A$ is in addition a Koszul algebra, there is a nice duality between the character series of $A$ and the character series of it's Koszul dual $A^!$, discovered in \cite{ZhangJing}. We recall the definition of a Koszul algebra and it's quadratic dual.
\begin{mydef}
A connected, positively graded quadratic algebra $A$ is Koszul if the trivial module $\C$ has a free linear resolution.
\end{mydef}
\begin{mydef}
Given a quadratic algebra $A = T(V)/(R)$ with generators $V = \mathbb{C}x_0+\ldots+\mathbb{C}x_n$, we define the \textit{quadratic dual} $A^!$ to be the quadratic algebra $T(V^*)/(J_2)$, with $J_2$ defined as the subspace of $V^* \otimes V^*$ such that $\forall w \in J_2, \forall v \in R_2: w(v) = 0$. If $A$ is Koszul, then we call $A^!$ the Koszul dual of $A$.
\end{mydef}
Some standard properties of Koszul algebras we will need are that there is a relation between the Hilbert series of $A$ and $A^!$, given by
\begin{displaymath}
H_A(t)H_{A^!}(-t) = 1
\end{displaymath}
and that $A$ is Koszul iff $A^!$ is Koszul.
\par Because the Koszul complex is a free resolution of the trivial module $\mathbb{C}$, which is isomorphic as $G$-representation to the trivial representation and because the Koszul complex consists of $G$-morphisms, we have a similar formula for finding the character series of the Koszul dual as we have for the Hilbert series. More precisely, we have
\begin{align}
Ch_A(g,t) Ch_{(A^!)^*}(g,-t) = 1.
\label{al:chKos}
\end{align}
This allows us to compute $Ch_{A^!}(g,t)$ whenever we know $Ch_A(g,t)$. To know the character series of $A^!$, we have to take the complex conjugates of the coefficients of $Ch_{(A^!)^*}(g,t)$. In short, for a Koszul algebra $A$, the character series associated to $A$ is completely determined by the character series of $A^!$.
\subsection{Application to polynomial rings}
Let $V_1$ be the $p$-dimensional simple representation of $H_p$ for which the center acts on by multiplication with $\omega$. For the polynomial ring $\C[V_1]$, one has the advantage that the Koszul dual $\C[V_1]^! = \wedge V_{p-1}^*$ is a finite dimensional algebra and therefore easier to decompose in simple $G$-representations. In \cite{DeLaet} the author calculated the character series of $\C[V_1]$ using this technique. There it was shown that
\begin{align*}
Ch_{\C[V_1]}(1,t)&=\frac{1}{(1-t)^p},\\
Ch_{\C[V_1]}(z^k,t)&=\frac{1}{(1-\omega^k t)^p},\\
Ch_{\C[V_1]}(e_1^ke_2^l,t)&=\frac{1}{1-t^p}, (k,l)\neq (0,0).
\end{align*}
In general, if one has a $G$-algebra $A$ of finite global dimension which is Koszul, then it is easier to calculate the character series of $A^!$. This in turn then determines the character series of $A$.
\section{Character series are constant}
\label{sec:Charconst}
We will now show that under good conditions, character series are indeed constant for $G$-deformations.
\begin{lemma}
Let $A$ be a $G$-algebra with $\xymatrix{ T(V) \ar@{->>}[r]^-{p} & A}$ the natural projection map. Decompose $A_k = \oplus_{i=1}^m S_i^{\oplus e_i}$ into simple $G$-representations and similarly $T(V)_k = \oplus_{i=1}^m S_i^{\oplus a_i}$ with naturally $a_i \geq e_i$. Then there exists a subspace $W \subset T(V)_k$ such that $W \cong A_k$ as $G$-representations and $p|_W$ is an isomorphism of $G$-representations.
\end{lemma}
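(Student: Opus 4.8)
The plan is to reduce the statement to the semisimplicity of the category of finite-dimensional $G$-representations. By construction the projection $p$ is $G$-equivariant and gradation preserving, so its degree-$k$ part $p|_{T(V)_k}\colon T(V)_k \to A_k$ is a surjective morphism of $G$-representations. Its kernel $I_k := \ker\bigl(p|_{T(V)_k}\bigr)$ is therefore a $G$-subrepresentation of $T(V)_k$, and we obtain a short exact sequence of $G$-representations
$$
0 \to I_k \to T(V)_k \xrightarrow{\,p\,} A_k \to 0.
$$

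First I would record that, because $G$ is finite (or more generally reductive, working over $\C$), every finite-dimensional $G$-representation is completely reducible; in the finite case this is Maschke's theorem. Hence the subrepresentation $I_k \subset T(V)_k$ admits a $G$-stable complement: there is a $G$-subrepresentation $W \subset T(V)_k$ with $T(V)_k = I_k \oplus W$ as $G$-representations. Equivalently, the short exact sequence above splits in the category of $G$-representations.

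Next I would check that this $W$ does the job. Since $W \cap I_k = 0$, the restriction $p|_W\colon W \to A_k$ has trivial kernel and is therefore injective; since $I_k$ maps to $0$ while $I_k \oplus W = T(V)_k$ maps onto $A_k$, the image of $W$ is all of $A_k$, so $p|_W$ is also surjective. Being an injective and surjective morphism of $G$-representations, $p|_W$ is a $G$-isomorphism, and in particular $W \cong A_k$ as $G$-representations, as required.

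There is no genuine obstacle here once semisimplicity is invoked; the only thing to be careful about is the hypothesis guaranteeing the splitting. If one prefers an explicit construction avoiding an abstract complement, I would instead argue isotypically: writing $T(V)_k = \oplus_{i=1}^m S_i^{\oplus a_i}$ and $A_k = \oplus_{i=1}^m S_i^{\oplus e_i}$, the $G$-map $p$ preserves isotypic components, so it restricts to a surjection $S_i^{\oplus a_i} \twoheadrightarrow S_i^{\oplus e_i}$ for each $i$; by Schur's lemma this surjection is governed by a linear surjection $\C^{a_i} \to \C^{e_i}$, which splits, and the direct sum over $i$ of the images of the resulting sections furnishes the desired subspace $W$.
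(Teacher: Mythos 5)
Your proof is correct, and your closing ``alternative'' paragraph is in fact precisely the paper's own proof: the paper invokes Schur's lemma to view $p_k$ as a surjective element of $\oplus_{i=1}^m\Hom(S_i^{\oplus a_i},S_i^{\oplus e_i})\cong\oplus_{i=1}^m\Hom(\C^{\oplus a_i},\C^{\oplus e_i})$ and then splits these linear surjections by standard linear algebra. Your primary argument --- splitting the short exact sequence $0 \to I_k \to T(V)_k \to A_k \to 0$ by complete reducibility and taking $W$ to be a $G$-stable complement of the kernel --- is a mild repackaging of the same underlying fact: both rest on semisimplicity of the category of finite dimensional $G$-representations, and the abstract complement furnished by Maschke's theorem (or reductivity over $\C$) is exactly what the paper assembles by hand from isotypic sections. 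What your version buys is directness and an explicit statement of the hypothesis guaranteeing the splitting, which the paper leaves implicit; what the paper's version buys is that $W$ is exhibited concretely in terms of the isotypic decomposition appearing in the statement, which is the form used immediately afterwards to choose the $G$-generators $v_{i,j}\in A_k$ and lifts $w_{i,j}\in T(V)_k$ with $p(w_{i,j})=v_{i,j}$.
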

\begin{proof}
It follows from Schur's lemma that the map $\xymatrix{ T(V)_k \ar@{->>}[r]^-{p_k} & A_k}$ is a surjective element of $\oplus_{i=1}^m\Hom(S_i^{\oplus a_i},S_i^{\oplus e_i})\cong\oplus_{i=1}^m \Hom(\C^{\oplus a_i},\C^{\oplus e_i})$. There it reduces to a statement of linear maps, which follows from standard linear algebra.
\end{proof}
More importantly, this means that if $A_k = \oplus_{i=1}^m S_i^{\oplus e_i}$, we can choose $G$-generators $v_{i,j} \in A_k,1 \leq i \leq m, 0\leq j \leq e_i$ and elements $w_{i,j},1 \leq i \leq m, 0\leq j \leq e_i$ of $T(V)_k$ such that $G \cdot w_{i,j} \cong S_i \cong G \cdot v_{i,j}$ and $p(w_{i,j}) = v_{i,j}$.
\par Now we can prove Theorem \ref{th:characterseries}.
\begin{proof}[Proof of Theorem \ref{th:characterseries}]
For $x \in Z$, let $\xymatrix{ T(V) \ar@{->>}[r]^-{p_x} & A_x}$ denote the natural homomorphism. Suppose that the character series of $A_x$ and $A_y$ are not the same. There exists a minimal $l \geq k$ such that $(A_x)_l \ncong (A_y)_l$ as $G$-modules. According to the lemma we can find a subspace $W$ of $T(V)_l$ such that $p_x(W) = (A_x)_l$ and $W \cong (A_x)_l$ as $G$-representations. Then there exists an open subset $U \subset Z$ with $x \in U$ such that the images of the chosen generators of $W$ are linearly independent $\forall z \in U$. This implies that $W \cap \ker(p_z)_l = 0$. As we know that all algebras have the same Hilbert series and $W$ is a $G$-representation, this automatically implies that $W \cong (A_z)_l$ as $G$-representations.
\par Similarly there exists an open subset $U'$ with $y \in U'$ and a subspace $W'$ of $T(V)_l$ such that $W' \cong (A_y)_l$ and $((p_z)_l)|_{W'}$ a $G$-isomorphism $\forall z \in U'$. As $Z$ was irreducible, there exists a point $a \in U \cap U'$. But then it follows that
$$
(A_x)_l \cong W \cong (A_a)_l \cong W' \cong (A_y)_l
$$
as $G$-representations, which is a contradiction.
\end{proof}
\begin{corollary}
Let $Z$ be a connected variety such that each irreducible component of $Z$ fulfils the conditions of Theorem \ref{th:characterseries}.  Then the character series $Ch_{A_x}(g,t)$ are also constant on $Z$ $\forall g \in G$.
\label{cor:conn}
\end{corollary}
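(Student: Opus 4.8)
The plan is to reduce the statement on the connected variety $Z$ to the irreducible case already settled by Theorem~\ref{th:characterseries}, and then to propagate the resulting constancy across the components using connectedness. First I would record the key observation that the character series $Ch_{A_x}(g,t)$ is a \emph{pointwise} invariant: for fixed $g$ it depends only on the isomorphism class of $A_x$ as a graded $G$-module, and hence is a genuine function of the point $x \in Z$ alone. Let $Z_1,\ldots,Z_r$ be the irreducible components of $Z$. By hypothesis each $Z_j$ satisfies the conditions of Theorem~\ref{th:characterseries}, so $Z_j$ parametrizes $G$-deformations of $A$; by the definition of a parametrizing variety this means $(A_x)_i \cong A_i$ as $G$-representations for every $x \in Z_j$ and every $i$, whence $\chi_{(A_x)_i}(g) = \chi_{A_i}(g)$. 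Consequently $Ch_{A_x}(g,t)$ is independent of $x$ on each $Z_j$; call this common value $c_j(g,t)$.

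Next I would glue these local constants. If $Z_i \cap Z_j \neq \emptyset$, I pick a point $p$ in the intersection. Since the character series is a function of the single algebra $A_p$, we get $c_i(g,t) = Ch_{A_p}(g,t) = c_j(g,t)$, so the constants attached to any two overlapping components coincide. It therefore suffices to show that the intersection graph $\Gamma$ — with one vertex per component and an edge whenever two components meet — is connected, for then all the $c_j(g,t)$ agree and the character series is constant on all of $Z$.

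The one point that genuinely needs care, and which I expect to be the main (though standard) obstacle, is precisely this connectedness of $\Gamma$. I would establish it by a topological argument: suppose $\Gamma$ were disconnected, so that the index set splits into two nonempty parts $I$ and $J$ with no edges between them. Then $\bigcup_{i \in I} Z_i$ and $\bigcup_{j \in J} Z_j$ are nonempty closed subsets of $Z$ (each a finite union of closed sets), disjoint because the absence of edges across the parts means all pairwise intersections vanish, and their union is all of $Z$. This contradicts the connectedness of $Z$. Hence $\Gamma$ is connected, and combined with the gluing step this forces all the $c_j(g,t)$ to be equal, yielding that $Ch_{A_x}(g,t)$ is constant on $Z$ for every $g \in G$, as claimed.
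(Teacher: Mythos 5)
Your proof is correct and is essentially the argument the paper intends: the paper states Corollary \ref{cor:conn} without an explicit proof, treating it as immediate from Theorem \ref{th:characterseries} applied to each irreducible component, followed by the standard gluing of a locally constant, pointwise invariant across components of a connected variety. Your write-up just makes this explicit, including the routine connectedness-of-the-intersection-graph step, and it correctly stays robust even if the reference algebra differs from component to component.
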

\subsection{Application to elliptic curves}
Let $$S = \mathbf{V}(x_{i+k}x_{-i+k},1 \leq i \leq \frac{p-3}{2}, 0\leq k \leq p-1)$$ be a cycle of $p$ lines in $\PP^{p-1} = \PP(V_{p-1})$ and let $$\mathcal{O}(S)=\C[V_1]/(x_{i+k}x_{-i+k},1 \leq i \leq \frac{p-3}{2}, 0\leq k \leq p-1)$$ be the associated graded ring.
It is easy to see that these relations form an $H_p$-orbit in $\C[V_1]_2$.
\begin{theorem}
The character series of $\mathcal{O}(S)$ is given by
\begin{gather*}
Ch(1,t) = \frac{1+(p-2)t+t^2}{(1-t)^2}\\
Ch(z^k,t)=\frac{1+(p-2)\omega^k t+(\omega^k t)^2}{(1-\omega^k t)^2}, 1\leq k \leq p-1\\
Ch(e_1^a e_2^b z^k,t)=1 \text{ if } (a,b)\neq (0,0),0\leq k \leq p-1
\end{gather*}
\end{theorem}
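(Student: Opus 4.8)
The plan is to exploit the fact that the defining relations $x_{i+k}x_{-i+k}$ are \emph{monomials}, so that $\mathcal{O}(S)$ is a monomial algebra carrying an explicit basis on which the generators of $H_p$ act transparently. First I would determine which quadratic monomials survive in the quotient. Given $a\neq b$ in $\Z_p$, the product $x_ax_b$ lies in the ideal iff $\{a,b\}=\{k+i,k-i\}$ for some $0\leq k\leq p-1$ and $1\leq i\leq\frac{p-3}{2}$; since $2$ is invertible mod $p$ this forces $k=(a+b)/2$ and $i=\pm(a-b)/2$, so $x_ax_b$ survives precisely when $a-b\equiv\pm 1\pmod p$. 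Consequently a monomial survives iff every pair of indices in its support differs by $\pm 1$, i.e. iff its support is a clique of the $p$-cycle; for $p\geq 5$ these cliques are exactly the singletons $\{c\}$ and the edges $\{c,c+1\}$. Hence a basis of $(\mathcal{O}(S))_n$ for $n\geq 1$ consists of the pure powers $x_c^n$ ($c\in\Z_p$) together with the edge monomials $x_c^s x_{c+1}^{n-s}$ ($c\in\Z_p$, $1\leq s\leq n-1$).

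Counting then gives $\dim(\mathcal{O}(S))_n=p+p(n-1)=pn$ for $n\geq 1$, whence
\[
Ch(1,t)=H_{\mathcal{O}(S)}(t)=1+p\,\frac{t}{(1-t)^2}=\frac{1+(p-2)t+t^2}{(1-t)^2}.
\]
The central characters are then immediate from Lemma \ref{lem:cent}: since $z$ acts on $V_1$ by multiplication with $\omega$, we obtain $Ch(z^k,t)=H_{\mathcal{O}(S)}(\omega^k t)$, which is exactly the stated formula.

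For $g=e_1^a e_2^b z^k$ with $(a,b)\neq(0,0)$ the constant term is $\chi_{(\mathcal{O}(S))_0}(g)=1$, so it remains to show $\chi_{(\mathcal{O}(S))_n}(g)=0$ for all $n\geq 1$. On the monomial basis, $e_1$ shifts the support by $-1$ (permuting basis vectors), while $e_2$ and $z$ act diagonally; therefore the trace of $g$ on $(\mathcal{O}(S))_n$ only receives contributions from monomials whose support is fixed by $e_1^a$. If $a\not\equiv 0\pmod p$ the shift moves every singleton and every edge, so there are no fixed basis monomials and the trace vanishes. If $a\equiv 0\pmod p$ (so necessarily $b\not\equiv 0$), then $e_1^a=1$ and $g$ is diagonal, so the trace equals the sum of its eigenvalues.

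This last sum is the main obstacle, since it is the only case where individual terms need not vanish. Writing the eigenvalue of $e_2^b z^k$ on a degree-$n$ monomial with exponent vector $(m_j)$ as $\omega^{kn+b\sum_j j m_j}$, I would compute
\[
\chi_{(\mathcal{O}(S))_n}(g)=\omega^{kn}\left(\sum_{c\in\Z_p}\omega^{bcn}+\sum_{c\in\Z_p}\sum_{s=1}^{n-1}\omega^{b(cn+n-s)}\right).
\]
Both sums over $c$ are geometric sums in $\omega^{bn}$ and vanish unless $p\mid bn$, i.e. (as $b\not\equiv 0$) unless $p\mid n$; this kills every degree with $p\nmid n$. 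When $p\mid n$ the two pieces survive but cancel: the pure-power sum contributes $p$, while the edge sum contributes $p\sum_{s=1}^{n-1}\omega^{-bs}=p(0-1)=-p$, because $\omega^{-b}$ is a primitive $p$-th root of unity and $n$ is a multiple of $p$. Thus the bracket is $0$ in every positive degree, and $Ch(e_1^a e_2^b z^k,t)=1$, which finishes the proof.
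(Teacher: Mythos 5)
Your proof is correct and follows essentially the same route as the paper: the same monomial basis (pure powers $x_c^n$ and edge monomials $x_c^s x_{c+1}^{n-s}$), the same Hilbert series count, Lemma \ref{lem:cent} for the central elements, and permutation/root-of-unity trace computations for the remaining elements. If anything, yours is slightly more complete, since you justify the basis via the monomial-ideal (clique) argument and compute the trace of a general $e_1^a e_2^b z^k$ directly, whereas the paper only computes the traces of $e_1$ and $e_2$ and then appeals to these determining the full character series.
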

\begin{proof}
All this follows if we can construct an easy basis of $\mathcal{O}(S)_k$ for each $k \geq 2$. In degree 2, a basis is given by $x_i^2,x_i x_{i+1}, 0 \leq i \leq p-1$. Using this, we can make in degree $k$ the basis 
\begin{gather*}
x_i^l x_{i+1}^{k-l}, 1 \leq l \leq k-1, 0\leq i \leq p-1\\
x_i^k \text{ if } 0\leq i \leq p-1
\end{gather*}
From this it follows that the Hilbert series is given by $1 + \sum_{j=1}^\infty jp t^j = \frac{1+(p-2)t+t^2}{(1-t)^2}$. The character series for elements of the center of $H_p$ is then correct by Lemma \ref{lem:cent}. For the other elements of $H_p$, we see that the action of $e_1$ is given by a permutation on this basis without any fixed elements and for $e_2$, the action on $x_i^k$ is given by multiplication by $\omega^{ik}$ and on $x_i^l x_{i+1}^{k-l}$ by $\omega^{il}\omega^{(i+1)(k-l)}=\omega^{ik-l}$. If $k \not\equiv 0 \bmod p$, then the character $\chi_{\mathcal{O}(S)_k}(e_2)$ is 0 as we then have $\mathcal{O}(S)_k \cong V^{\oplus k}$ with $V$ a simple $p$-dimensional representation of $H_p$. If $k \equiv 0 \bmod p$ and $k \neq 0$, we get the summation of the $p$th roots of unity $\frac{k}{p}$ times, which is 0. As the characters of $e_1,e_2$ and $z$ determine the character series for $H_p$, we are done.
\end{proof}
From Theorem \ref{th:characterseries} it follows that
\begin{corollary}
The character series of the coordinate ring of an elliptic curve embedded in $\PP^{p-1}$ with $p\geq 5$ prime such that $H_p$ acts on it by translation with $p$-torsion points has as character series
\begin{gather*}
Ch(1,t) = \frac{1+(p-2)t+t^2}{(1-t)^2},\\
Ch(z^k,t)=\frac{1+(p-2)\omega^k t+(\omega^k t)^2}{(1-\omega^k t)^2},1\leq k \leq p-1\\
Ch(e_1^a e_2^b z^k,t)=1 \text{ if } (a,b)\neq (0,0), 0\leq k \leq p-1.
\end{gather*}
\end{corollary}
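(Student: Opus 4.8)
The plan is to realise both the homogeneous coordinate ring of such an elliptic curve and the ring $\mathcal{O}(S)$ of the cycle of $p$ lines as fibres of one irreducible family of $H_p$-algebras, and then to transport the character series computed in the previous theorem across that family by means of Theorem \ref{th:characterseries}. Concretely, take $G = H_p$ and $V = V_1$, and let $Z$ be the image of the modular curve $X(p)$ under the morphism sending a point (an elliptic curve with level-$p$ structure together with its $H_p$-equivariant embedding in $\PP^{p-1}$) to the space of its quadratic relations, viewed as a point of the parameter space $Z_2$ of Section \ref{sec:Galg}. Since $X(p)$ is an irreducible curve (for $p=5$ one even has $X(5)\cong\PP^1$ by Section \ref{sec:Shioda}), its image $Z$ is irreducible.

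First I would check that every fibre defines a genuine quadratic $H_p$-deformation of $\mathcal{O}(S)$ up to degree $2$. For $p\geq 5$ an elliptic normal curve of degree $p$ has its ideal generated in degree $2$, and its $\frac{p(p-3)}{2}$ quadrics span an $H_p$-subrepresentation of $\operatorname{Sym}^2 V_1$. Because the centre of $H_p$ acts on $\operatorname{Sym}^2 V_1$ by $\omega^2$, Schur's lemma gives $\operatorname{Sym}^2 V_1\cong V_2^{\oplus\frac{p+1}{2}}$, so the relation space of any fibre is an embedded copy of $V_2^{\oplus\frac{p-3}{2}}$; the same holds for the relations $x_{i+k}x_{-i+k}$ of $\mathcal{O}(S)$. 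Thus every fibre, elliptic or degenerate, is a point of the single Grassmannian $\Emb_{H_p}(V_2^{\oplus\frac{p-3}{2}},\operatorname{Sym}^2 V_1)\cong\Grass(\tfrac{p-3}{2},\tfrac{p+1}{2})$, and $Z$ embeds into $Z_2$ as required by the definition.

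Next I would verify the Hilbert-series hypothesis of Theorem \ref{th:characterseries}. For an elliptic curve embedded by a complete linear system of degree $p$, projective normality together with Riemann--Roch gives $\dim A_n = np$ for $n\geq 1$, so $H_{A}(t)=1+\sum_{n\geq 1}np\,t^n=\frac{1+(p-2)t+t^2}{(1-t)^2}$; and the previous theorem shows that $\mathcal{O}(S)$ has exactly this Hilbert series. Hence $H_{A_x}(t)$ is constant on $Z$, and applying Theorem \ref{th:characterseries} (or Corollary \ref{cor:conn}) shows that $Z$ parametrises $H_p$-deformations of $\mathcal{O}(S)$: for every point $x\in Z$ and every $n$ one has $(A_x)_n\cong\mathcal{O}(S)_n$ as $H_p$-representations, so the character series $Ch_{A_x}(g,t)$ is independent of $x$. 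Since the cusps of $X(p)$ are carried to cycles of $p$ lines, $\mathcal{O}(S)$ is itself a fibre, and reading off its character series from the previous theorem yields the stated formulas for every elliptic fibre.

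The main obstacle is the geometric input that the degenerate fibre $\mathcal{O}(S)$ really lies in the same irreducible component as the elliptic fibres, i.e.\ that the assignment $x\mapsto(\text{quadratic relations of }A_x)$ extends to a morphism $X(p)\to Z_2$ that remains well defined across the cusps. This is where the flatness of Shioda's family $S(p)\to X(p)$ from Section \ref{sec:Shioda} is essential: it guarantees that the relation spaces vary algebraically and that the cusp fibres, which are the cycles of $p$ lines, are genuine limits of the elliptic relation spaces inside the Grassmannian. For $p=5$ this is completely explicit, since the fibres are cut out by $ABx_i^2+A^2x_{i+1}x_{i-1}-B^2x_{i+2}x_{i-2}$ over $[A:B]\in\PP^1$, degenerating at $[1:0]$ and $[0:1]$ precisely to the cycle-of-lines relations. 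Once this is in place, everything else is a direct application of the results already established.
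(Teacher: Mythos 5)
Your proposal is correct and follows essentially the same route as the paper: realize the coordinate rings of the $H_p$-equivariantly embedded elliptic curves and the cycle of $p$ lines as fibres of the family over $X(p)$ coming from Shioda's surface, note that the Hilbert series is constant across the family, and apply Theorem \ref{th:characterseries} (via Corollary \ref{cor:conn}) to transport the character series computed for $\mathcal{O}(S)$ to every elliptic fibre. The extra details you supply (the decomposition $\operatorname{Sym}^2 V_1 \cong V_2^{\oplus\frac{p+1}{2}}$, the Riemann--Roch computation of the Hilbert series, and the flatness of $S(p) \to X(p)$ ensuring the cusp fibres are genuine limits in the Grassmannian) are exactly the facts the paper's terser proof implicitly relies on.
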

\begin{proof}
In Subsection \ref{sub:Mod}, we have seen that any elliptic curve $E$ can be embedded in $\PP^{p-1}$ with $p$ prime such that the finite Heisenberg group acts on $\PP^{p-1}$ by a projectivication of one of its irreducible $p$-dimensional representations. $E$ is stable under this action and the action of $H_p$ on $E$ reduces to translation with $p$-torsion points.
\par In particular, this means that the relations of $E$ in $\C[x_0,\ldots,x_{p-1}]$ form an $H_p$-stable subspace and that the graded coordinate ring $\mathcal{O}(E)$ is an $H_p$-algebra. For $p\geq 5$ the relations of $O(E)$ are quadratic.
\par In Section \ref{sec:Shioda} we have seen that these families of elliptic curves are parametrized by $X'(p)$. This family degenerates to a cycle of $p$ lines over the cusps in $X(p)= \overline{X'(p)}$ and one of these cycles is determined by the equations $x_{i+k}x_{-i+k},1 \leq i \leq \frac{p-3}{2}, 0\leq k \leq p-1$. The Hilbert series is constant for each point on $X(p)$.
\par Now apply Corollary \ref{cor:conn} with $Z= X(p)$ and $G = H_p$.
\end{proof}
\section{Sklyanin algebras}
\label{sec:Skly}
In \cite{odesskiifeigin} Odesskii and Feigin constructed Sklyanin elliptic algebras using $\theta$-functions on lattices of every dimension $n$. By construction, each Sklyanin algebra is a $H_n$-algebra. Tate and Van den Bergh showed in \cite{VdBTate} showed that these Sklyanin algebras have the same Hilbert series as the polynomial ring in $n$ variables. Let $Q_{n,1}(E,\tau)$ be a Sklyanin algebra associated to the elliptic curve $E$ and a point $\tau \in E$.
\begin{theorem}
Let $n=p$ be prime. Then the character series associated to $H_p$ of the Sklyanin algebras is the same as the character series of the polynomial ring in $p$ variables.
\label{th:Sklychar}
\end{theorem}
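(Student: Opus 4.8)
The plan is to realize every Sklyanin algebra $Q_{p,1}(E,\tau)$ as an $H_p$-deformation of the polynomial ring $\C[V_1]$ and then feed this into Theorem \ref{th:characterseries}. The first step is to understand the degree-two part as an $H_p$-module. Since the center $z$ acts on $V_1$ by $\omega$, it acts on $V_1\otimes V_1$ by $\omega^2$, and a short character computation ($\chi_{V_1\otimes V_1} = p\,\chi_{V_2}$ on central elements, $0$ elsewhere) gives $V_1\otimes V_1 \cong V_2^{\oplus p}$, where $V_2$ is the simple $p$-dimensional representation on which $z$ acts by $\omega^2$. The relations of $\C[V_1]$ are $\wedge^2 V_1$, and the same kind of computation yields $\wedge^2 V_1 \cong V_2^{\oplus\frac{p-1}{2}}$. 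Hence the space of $H_p$-deformations of $\C[V_1]$ up to degree $2$ is
$$
Z_2 = \Emb_{H_p}\big(V_2^{\oplus\frac{p-1}{2}},\, V_1\otimes V_1\big) \cong \Grass\big(\tfrac{p-1}{2},p\big),
$$
which is irreducible.

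Next I would place the Sklyanin algebras inside $Z_2$. By the Odesskii--Feigin construction each $Q_{p,1}(E,\tau)$ is an $H_p$-algebra, so its $\binom{p}{2}$ quadratic relations span an $H_p$-subrepresentation $R_{(E,\tau)}\subset V_1\otimes V_1$ of dimension $\tfrac{p(p-1)}{2}$; being a submodule of a $V_2$-isotypic module it is forced to be $\cong V_2^{\oplus\frac{p-1}{2}}$, so it defines a point of $\Grass(\tfrac{p-1}{2},p)$. The assignment $(E,\tau)\mapsto R_{(E,\tau)}$ is a morphism from the parameter space of Sklyanin algebras, which is irreducible by the modular description recalled in Subsection \ref{sub:Mod}; I would let $Z$ be the Zariski closure of its image, an irreducible subvariety of $Z_2$. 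The key geometric input is that $\C[V_1]$ arises as the classical limit $\tau\to O$ of this family (the $\theta$-function relations degenerate to the commutators $\wedge^2 V_1$), so the point of $Z_2$ corresponding to $\C[V_1]$ lies in $Z$; thus $Z$ parametrizes $H_p$-deformations of $\C[V_1]$ up to degree $2$ in the sense required by Theorem \ref{th:characterseries}.

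It then remains to verify the Hilbert-series hypothesis. Tate and Van den Bergh \cite{VdBTate} show that every $Q_{p,1}(E,\tau)$ has Hilbert series $(1-t)^{-p}$, the same as $\C[V_1]$; together with the flatness content of that result and the identification of $\C[V_1]$ as the degenerate fibre, this gives $H_{A_x}(t) = (1-t)^{-p}$ for every $x\in Z$. Applying Theorem \ref{th:characterseries} with $G = H_p$, $A = \C[V_1]$ and $k = 2$, I conclude that $Z$ parametrizes $H_p$-deformations, that is, $(A_x)_i \cong \C[V_1]_i$ as $H_p$-representations for all $i$ and all $x\in Z$. Since every Sklyanin algebra is a point of $Z$, each decomposes degreewise exactly as $\C[V_1]$ does, which is precisely the asserted equality of character series.

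I expect the main obstacle to be the Hilbert-series step: one must know that the common value $(1-t)^{-p}$ is attained at every point of the closed variety $Z$, and not merely at the Sklyanin points and at the polynomial-ring point. This is exactly where the flatness results of \cite{VdBTate} and the explicit degeneration of the defining relations to the commutators are needed. The remaining point to confirm is the equivariance claim that the Odesskii--Feigin relations genuinely form an $H_p$-subrepresentation, so that $R_{(E,\tau)}$ lands in the correct isotypic Grassmannian; this is immediate from the Heisenberg action recalled in Section \ref{sec:Heis}.
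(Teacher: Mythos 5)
Your setup coincides with the paper's: the decomposition $V_1\otimes V_1\cong V_2^{\oplus p}$, the identification of the relation space of each $Q_{p,1}(E,\tau)$ with a copy of $V_2^{\oplus\frac{p-1}{2}}$, the deformation space $\Emb_{H_p}(V_2^{\oplus\frac{p-1}{2}},V_2^{\oplus p})\cong\Grass(\frac{p-1}{2},p)$, the Hilbert series input from \cite{VdBTate}, and the appeal to Theorem \ref{th:characterseries}. The gap is in your choice of $Z$. You let \emph{both} $E$ and $\tau$ vary and take $Z$ to be the Zariski closure of the image of the whole family. Theorem \ref{th:characterseries} requires $H_{A_x}(t)=H_{A_y}(t)$ for \emph{every} pair of points of $Z$, and \cite{VdBTate} controls only the honest Sklyanin points, not the boundary points your closure adds --- the limits over the cusps of $X(p)$, where $E$ degenerates to a cycle of $p$ lines. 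Graded dimensions are upper semicontinuous along such a family, so they can jump on the boundary, and they genuinely do: this very paper exhibits the phenomenon in Section \ref{sec:Sklypoint2}, where the closure $C$ of the curve of order-$2$ Sklyanin points contains twelve points whose algebras have relations $x_i^2=0$, $\{x_i,x_{i+1}\}=0$, i.e.\ the quadratic dual of the coordinate ring of a cycle of five lines, whose Hilbert series is certainly not $(1-t)^{-5}$ (the discrepancy appears in degree $3$). For $p=3$ the same failure occurs at the twelve degenerate Sklyanin points of $\PP^2$. So your assertion ``this gives $H_{A_x}(t)=(1-t)^{-p}$ for every $x\in Z$'' is not merely unproved but false for closures of this kind, and the ``flatness content'' of \cite{VdBTate} cannot repair it, since that theorem never addresses degenerate fibres. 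You correctly flagged this as the main obstacle, but it is a real hole, not a technicality to be absorbed.

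The repair is exactly the paper's proof: fix \emph{one} elliptic curve $E$ and let only $\tau$ vary. Then $\tau\mapsto I_\tau$ is a morphism $E\to\Grass(\frac{p-1}{2},p)$ from a projective irreducible curve, so its image is already closed and irreducible and no closure step is needed; every point of $Z=E$ corresponds to an actual algebra $Q_{p,1}(E,\tau)$, with $\tau=O$ giving the relations $V_1\wedge V_1$ of $\C[V_1]$ itself. Now \cite{VdBTate} applies at every point of $Z$, the hypotheses of Theorem \ref{th:characterseries} hold with $k=2$, and the conclusion follows. This also makes your ``classical limit'' discussion unnecessary: the polynomial ring is a fibre of the family over $E$, not a boundary point of it.
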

\begin{proof}
Let $E$ be an elliptic curve with $\tau \in E$ and let $V_1$ be the simple representation of $H_p$ associated to $\omega$ a primitive $p$th root of unity. As $Q_{p,1}(E,\tau) = T(V_1)/I_\tau$ is an $H_p$-algebra and the relations of $Q_{p,1}(E,\tau)$ are quadratic, this implies that the $\frac{p(p-1)}{2}$-relations form an $H_p$-stable subspace of $V_1 \otimes V_1 \cong V_2^{\oplus p}$. In particular, this implies a morphism
$$
\xymatrix{ E \ar[r] & \Grass(\frac{p-1}{2},p)} \cong \Emb_{H_p}(V_2^{\oplus\frac{p-1}{2}},V_2^{\oplus p})
$$
whose image contains the embedding determined by $V_1 \wedge V_1$, which are the relations of the polynomial ring $\C[V_1]$. Then the theorem follows from Theorem \ref{th:characterseries} by putting $Z = E$ and $G= H_p$.
\end{proof}
In \cite{DeLaet}, the author calculated for regular Clifford algebras which are $H_p$-deformations of the polynomial ring $\C[V_1]$ the character series using the fact that such Clifford algebras are free modules over a polynomial ring. It was remarked that these algebras have the same character series as $\C[V_1]$. However, we can now give a new proof using Corollary \ref{cor:conn}.
We first need to prove Theorem \ref{th:SklClif}.
\begin{proof}[Proof of Theorem \ref{th:SklClif}]
According to Odeskii and Feigin (cfr. \cite{odesskiifeigin}), the relations for the $n$-dimensional Sklyanin algebra associated to the elliptic curve $E$ and the point $\tau \in E$ are given by 
$$
\sum_{r \in \Z_n} \frac{\theta_{j-i}(0)}{\theta_{j-i-r}(-\tau)\theta_{r}(\tau)} x_{j-r}x_{i+r}=0	
$$
with $i,j \in \Z_n, i \neq j$ and $\theta_{0}(z),\ldots,\theta_{n-1}(z)$ $\theta$-functions of order $n$. Let $n=p$ be an odd prime. We know that the defining relations of a Sklyanin algebra form an $H_p$-stable subspace of $V_1 \otimes V_1 \cong V_2^{\oplus p}$, with $V_2$ the simple $p$-dimensional representation associated to $\omega^2$. It is therefore sufficient to consider a basechange for $i = -j$ to see if we get relations of the form $x_i x_{-i} + x_{-i}x_i= a_i x_0^2$. As $\tau = -\tau$ as $\tau$ is of order 2, we find that the relations are given by $H_p$-orbits of
$$
\sum_{r \in \Z_p} \frac{\theta_{2j}(0)}{\theta_{2j-r}(\tau)\theta_{r}(\tau)} x_{j-r}x_{-j+r}=0	
$$
with $1\leq j \leq \frac{p-1}{2}$. For fixed $j$ and $r$ the coefficient of $x_{-j+r}x_{j-r} = x_{j-(2j-r)}x_{-j+(2j-r)}$ is
$$
\frac{\theta_{2j}(0)}{\theta_{2j-(2j-r)}(\tau)\theta_{2j-r}(\tau)}=\frac{\theta_{2j}(0)}{\theta_{r}(\tau)\theta_{2j-r}(\tau)}
$$
which is exactly the same as the coefficient of $x_{j-r}x_{-j+r}$. This means that every relation belongs to the vector space generated by the $\{x_j,x_{-j}\} = x_j x_{-j} + x_{-j} x_j, 1\leq j \leq \frac{p-1}{2}$, and $x_0^2$. This vector space is $\frac{p+1}{2}$-dimensional in which we need to find a $\frac{p-1}{2}$-dimensional subspace, call this space $R$. Let $A$ be the $\frac{p-1}{2}\times \frac{p-1}{2}$-matrix with on place $(i,j)$ the coefficient from  $\{x_j,x_{-j}\}$ coming from the $i$th equation and let $b$ be the column vector with on the $i$th place the coefficient of $x_0^2$ of the $i$th equation. Let $M$ be the matrix $[A,b]$, then the rank of this matrix is $\frac{p-1}{2}$. There are now 2 options:
\begin{enumerate}[(a)]
\item The rank of $A$ is $\frac{p-1}{2}$: then we are done as this means that up to basechange in $R$ we have relations of the form $\{x_i,x_{-i}\} + a_ix_0^2 = 0$.
\item The rank of $A$ is $\frac{p-3}{2}$: this means that there is a column in $A$ which is a linear combination of the other ones. As the rank of $M$ is $\frac{p-1}{2}$, this necessarily means that $\{x_i,x_{-i}\} = a_i \{x_k,x_{-k}\}$ and $x_0^2= a_0 \{x_k,x_{-k}\}$ for some $1\leq k  \leq \frac{p-1}{2}$. As we have that $x_0^2 \neq 0$, this implies that $a_0 \neq 0$. But this means that $A$ should have rank $\frac{p-1}{2}$, a contradiction.
\end{enumerate}
We have proved that there exists a basis of the relations of the form $\{x_i,x_{-1}\} + a_ix_0^2 = 0$, which are indeed graded Clifford algebras.
\end{proof}
\begin{remark}
This theorem is most certainly false if $n$ is even: in this case, there can be a coefficient before $x_{\frac{n}{2}}^2$ which is not 0. Therefore, the proof does not work. Also, there are too many elements in the center for $Q_{n,1}(E,\tau)$ to be a Clifford algebra when $n$ is even (cfr. \cite{odesskiifeigin}).
\end{remark}
\begin{corollary}
Let $p$ be an odd prime. The graded regular Clifford algebras with relations defined by $H_p$-orbits of $\{x_i, x_{-i}\} = a_i x_0^2, 1\leq i \leq \frac{p-1}{2}$ have the same character series as the polynomial ring in $p$ variables.
\end{corollary}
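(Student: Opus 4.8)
The plan is to realise the given Clifford algebras as a single irreducible family of $H_p$-deformations and to transport the character series from a Sklyanin algebra sitting \emph{inside} this family, rather than from the polynomial ring directly. First I would record that the Hilbert series is constant across the whole family: every $p$-dimensional graded Clifford algebra $A(M)$ is free of rank $2^p$ over its degree-$2$ polynomial subring, and counting the exterior monomials $x_{i_1}\cdots x_{i_k}$ degree by degree gives $H_{A(M)}(t) = (1+t)^p/(1-t^2)^p = 1/(1-t)^p$, which is exactly the Hilbert series of $\C[V_1]$. Thus all members of the family, and the polynomial ring, share one Hilbert series.

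Next I would parametrise the family. The relations are the $H_p$-orbits of $\{x_i,x_{-i}\} = a_i x_0^2$ for $1 \le i \le \frac{p-1}{2}$; each basic relation is $e_2$-invariant and generates a copy of $V_2$ inside $V_1 \otimes V_1 \cong V_2^{\oplus p}$, so the full relation space is isomorphic to $V_2^{\oplus \frac{p-1}{2}}$ and is recorded by a point of $\Grass(\frac{p-1}{2},p) \cong \Emb_{H_p}(V_2^{\oplus \frac{p-1}{2}}, V_2^{\oplus p})$. Letting $(a_1,\ldots,a_{\frac{p-1}{2}})$ vary over the regular locus $Z \subseteq \A^{\frac{p-1}{2}}$ embeds $Z$ into the degree-$2$ deformation space $Z_2 = \Grass(\frac{p-1}{2},p)$; as an open subset of affine space $Z$ is irreducible, and by the previous paragraph the Hilbert series is constant on $Z$.

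The key move is the choice of base point. By Theorem \ref{th:SklClif} every $p$-dimensional Sklyanin algebra at a point of order $2$ is a graded Clifford algebra with relations of exactly this shape, so at least one such (regular) algebra $A$ is a member of the family and its point lies in $Z$. Hence $Z$ parametrises $H_p$-deformations up to degree $2$ of $A$, and Theorem \ref{th:characterseries} applies with $G = H_p$, $k=2$ and base point $A$: since $Z$ is irreducible with constant Hilbert series, it parametrises $H_p$-deformations of $A$, so $(A_x)_i \cong A_i$ as $H_p$-representations for every $x \in Z$ and every degree $i$. In particular the character series $Ch_{A_x}(g,t)$ is constant on $Z$. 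Finally, Theorem \ref{th:Sklychar} identifies the character series of the Sklyanin algebra $A$ with that of $\C[V_1]$, so every algebra in the family has the character series of the polynomial ring.

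The main obstacle is precisely that the polynomial ring cannot serve as the base point: its defining relations span the antisymmetric space $V_1 \wedge V_1$, whereas the Clifford relations lie in the symmetric part, so $\C[V_1]$ is not a member of $Z$ and there is no Clifford degeneration reaching it. The Sklyanin algebra at an order-$2$ point is the bridge that is simultaneously inside $Z$ (by Theorem \ref{th:SklClif}) and character-equivalent to $\C[V_1]$ (by Theorem \ref{th:Sklychar}). If one prefers to exhibit $\C[V_1]$ explicitly as a base point, the same conclusion follows from Corollary \ref{cor:conn} applied to the connected but reducible union of $Z$ with the Sklyanin pencil $\{Q_{p,1}(E,\tau)\}_{\tau \in E}$ over a fixed curve $E$: this pencil contains $\C[V_1]$ at $\tau = 0$ and meets $Z$ at the order-$2$ Sklyanin algebra, so connectedness forces the common constant character series to be that of $\C[V_1]$.
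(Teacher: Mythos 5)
Your proposal is correct and follows essentially the same route as the paper: both place a Sklyanin algebra at a point of order $2$ inside the irreducible family of regular $H_p$-Clifford algebras via Theorem \ref{th:SklClif}, invoke Theorem \ref{th:Sklychar} for its character series, and conclude by constancy of character series on an irreducible family with constant Hilbert series (Theorem \ref{th:characterseries}, resp.\ Corollary \ref{cor:conn}). Your write-up merely makes explicit two points the paper leaves implicit --- the computation $H_{A(M)}(t)=(1+t)^p/(1-t^2)^p=1/(1-t)^p$ from freeness of rank $2^p$, and the observation that $\C[V_1]$ itself is not in the Clifford family so the Sklyanin algebra must serve as the bridge --- which is a useful clarification but not a different argument.
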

\begin{proof}
The moduli space of graded regular $H_p$-Clifford algebras is an open subset $U$ of $\PP^{\frac{p-1}{2}}$ which intersects the moduli space of Sklyanin algebras in an infinite number of points. As we already know that the character series of all the Sklyanin algebras are the same and $U$ is irreducible, we know from corollary \ref{cor:conn} that the character series are the same.
\end{proof}
We now adopt the notation of \cite{DeLaet} for $H_p$-Clifford algebras.
\begin{mydef}
The algebra $C(a_0:\ldots:a_{\frac{p-1}{2}})$  with $(a_0:\ldots:a_{\frac{p-1}{2}})\in \PP^{\frac{p-1}{2}}$ stands for the $H_p$-algebra with generators $x_0,\ldots,x_{p-1}$ and $H_p$-representatives as relations
$$
\begin{cases}
a_0\{x_{i},x_{-i}\} = a_i x_0^2, 1\leq i\leq \frac{p-1}{2},\\
a_{i+1}\{x_{i}x_{-i}, x_{-i}x_{i}\} = a_i\{x_{(i+1)}x_{-(i+1)}, x_{-(i+1)}x_{(i+1)}\}, 1 \leq i \leq \frac{p-3}{2}.
\end{cases}
$$
\end{mydef}

Let $p$ be any odd prime. In \cite{DeLaet} it was shown that the $H_p$-Clifford algebras form a $\frac{p-1}{2}$-dimensional family of $\Grass(\frac{p-1}{2},p)$. On the other hand, the $p$-dimensional Sklyanin algebras form a 2-dimensional family. Then Theorem \ref{th:SklClif} implies that in $\Grass(\frac{p-1}{2},p)$ the moduli space of Sklyanin algebras and the moduli space of $H_p$-Clifford algebras intersect in a curve.
\section{Sklyanin algebras of global dimension 5 and points of order 2}
\label{sec:Sklypoint2}
When the global dimension of the Sklyanin algebras is 5, we can find the equations for the Sklyanin algebras $Q_{5,1}(E,\tau)$ with $\tau \in E$ of order $2$. We know that the relations can be written as
\begin{align*}
\{x_{1+k},x_{4+k}\} = a x_k^2, 0 \leq k \leq 4, \\
\{x_{2+k},x_{3+k}\} = b x_k^2, 0 \leq k \leq 4,
\end{align*}
with associated quadratic form
$$Q=
\begin{bmatrix}
2x_0^2  & bx_3^2  & a x_1^2 & a x_4^2 & b x_2^2 \\
b x_3^2 & 2x_1^2  & b x_4^2 & a x_2^2 & a x_0^2 \\
a x_1^2 & b x_4^2 & 2 x_2^2 & b x_0^2 & a x_3^2 \\
a x_4^2 & a x_2^2 & b x_0^2 & 2 x_3^2 & b x_1^2 \\
b x_2^2 & a x_0^2 & a x_3^2 & b x_1^2 & 2 x_4^2
\end{bmatrix}
$$
over the polynomial ring $\C[x_0^2,x_1^2,x_2^2,x_3^2,x_4^2]$.
\begin{theorem}
If the Clifford algebra $C(1:a:b)$ determines a Sklyanin algebra $Q_{5,1}(E,\tau)$ with $\tau$ of order $2$, then the point $(1:a:b)$ lies on the affine curve $$C'=\mathbf{V}(-a^3b^3+a^5+b^5+2a^2b^2-8ab) \subset \A^2_{(a,b)}.$$ 
The elliptic curve $E'=E/\langle \tau \rangle$ is given by the curve $C_t$ as defined in Section \ref{sec:Shioda} with 
$$
t = \frac{a^3b-b^3-2a^2}{a^4-ab^2-4b}.
$$
\end{theorem}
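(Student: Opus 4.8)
The plan is to exploit the graded Clifford structure guaranteed by Theorem \ref{th:SklClif} together with the determinantal description of its point modules. First I would start from the Odesskii--Feigin relations specialized to $\tau$ of order $2$, which, exactly as in the proof of Theorem \ref{th:SklClif}, collapse onto the two $H_5$-orbits $\{x_{1+k},x_{4+k}\}=a\,x_k^2$ and $\{x_{2+k},x_{3+k}\}=b\,x_k^2$; this presents $Q_{5,1}(E,\tau)$ as the Clifford algebra $C(1:a:b)$ with the explicit symmetric matrix $Q$ over $R=\C[x_0^2,\ldots,x_4^2]$ displayed above. The structural fact to record at the outset is that $Q$ is invariant under the simultaneous cyclic shift $i\mapsto i+1$ on rows and columns and $x_j^2\mapsto x_{j+1}^2$ on entries, so that the whole computation below is $\Z_5$-equivariant.

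Next I would locate the elliptic curve. Writing $z_i=x_i^2$ and $Y=\PP^4_{[z_0:\ldots:z_4]}=\wis{Proj}(R)$, Proposition \ref{prop:projcliffod} tells us that the point modules of $C(1:a:b)$ lie over the rank-two locus $Y_2=\{p\in Y\mid \rank Q(p)=2\}$ and form a $2$-to-$1$ cover of it, ramified over $X_1$. On the other hand the point scheme of the Sklyanin algebra $Q_{5,1}(E,\tau)$ is $E$, with shift given by translation by $\tau$; since $\tau$ has order $2$, the covering involution is precisely this shift, and the cover $E\to Y_2$ is the degree-two isogeny $E\to E/\langle\tau\rangle$. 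Hence $Y_2=E'$, embedded in $\PP^4_{[z]}$. As $Y_2$ inherits the projectivized $H_5$-action it is an elliptic curve in Heisenberg normal form, that is, one of Shioda's fibres $C_t$ from Section \ref{sec:Shioda}, and it remains only to pin down $t$ and the locus of admissible $(a,b)$.

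The computational core is then to match defining equations. I would compute the ideal of $Y_2$ as the ideal generated by the $3\times 3$ minors of $Q$, use the $\Z_5$-symmetry to reduce to one minor per orbit, and compare the resulting quadrics in the $z_i$ with the Shioda relations $t\,z_i^2+t^2 z_{i+1}z_{i-1}-z_{i+2}z_{i-2}=0$ of $C_t$. Equating coefficients within a single orbit forces $t$ to be the stated rational function $t=\frac{a^3b-b^3-2a^2}{a^4-ab^2-4b}$, while the requirement that the remaining minors be consistent with this value of $t$ — equivalently, that the rank-two locus really is a curve of Shioda type, i.e. that $C(1:a:b)$ is genuinely Sklyanin — is an overdetermined system whose single compatibility condition is the vanishing of $-a^3b^3+a^5+b^5+2a^2b^2-8ab$. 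This is exactly the statement $(1:a:b)\in C'$, and it fits the fact noted after Theorem \ref{th:SklClif} that the Sklyanin algebras cut out a curve inside the $\PP^2$ of $H_5$-Clifford algebras.

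I expect the elimination in the last step to be the main obstacle: verifying that the $3\times 3$ minors of $Q$ generate the same saturated ideal as the $C_t$ relations, and simultaneously extracting both $t(a,b)$ and the single relation $C'$, is a heavy Gr\"obner/resultant computation that is cleanest to confirm by machine, in the spirit of the computer check mentioned for Proposition \ref{prop:order2}. A secondary point requiring care is the geometric identification $Y_2=E/\langle\tau\rangle$ rather than $E$ itself; this hinges on matching the Clifford $2$-to-$1$ cover of $Y_2$ with the $2$-isogeny determined by the order-two shift, and one can cross-check it against the explicit $2$-torsion locus of the Shioda family recorded in Proposition \ref{prop:order2}.
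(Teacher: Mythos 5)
Your structural reduction is exactly the paper's: Theorem \ref{th:SklClif} gives the Clifford presentation $C(1:a:b)$ with the displayed matrix $Q$, the point modules of a graded Clifford algebra lie over the rank $\leq 2$ locus of $Q$ (Proposition \ref{prop:projcliffod}, via \cite{LeBruynCentral}), Smith's theorem \cite{smithpoint} identifies the point variety of $Q_{5,1}(E,\tau)$ with $E$, and the resulting locus in $\wis{Proj}(\C[x_0^2,\ldots,x_4^2])$ is $E'=E/\langle\tau\rangle$ in Heisenberg form, hence a Shioda curve $C_t$. Where you diverge is the computational core, and there your plan as stated breaks down: the entries of $Q$ are \emph{linear} in $z_i=x_i^2$, so the $3\times 3$ minors are cubics in the $z_i$, while the Shioda relations $t z_i^2+t^2z_{i+1}z_{i-1}-z_{i+2}z_{i-2}$ are quadrics. ``Equating coefficients within a single orbit'' is therefore impossible; you are forced into the saturated-ideal membership comparison you defer to the end, which is exactly the heavy computation one wants to avoid. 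The paper sidesteps this with one observation: since the rank $\leq 2$ locus must contain $C_t$, every $3\times3$ minor must vanish at the single distinguished point $O_t=(0:1:t:-t:-1)$. This yields low-degree polynomial identities in $(a,b,t)$ — two of them being $-2b^2t^3+2ab^2t-2a^2$ and $-a^2bt^2-ab^2t+2a^2$ — from which $t$ is eliminated and then recovered as a rational function of $(a,b)$. Since the theorem claims only a \emph{necessary} condition, evaluation at one point suffices; your full ideal matching would prove more than is required at far greater cost.

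The second gap is the spurious component. The elimination does not produce $C'$ alone: it produces $a\,(-a^3b^3+a^5+b^5+2a^2b^2-8ab)=0$, and the line $\mathbf{V}(a)$ must be discarded. The paper does this non-computationally: the locus of Sklyanin algebras in $\PP^2$ is stable under the $\wis{PSL}_2(5)$-action constructed in \cite{DeLaet}, while the projective closure of $\mathbf{V}(a)$ is not $\wis{PSL}_2(5)$-stable. Your assertion that the compatibility condition of the overdetermined system is exactly the vanishing of $-a^3b^3+a^5+b^5+2a^2b^2-8ab$ is optimistic: any such elimination is liable to produce the same extraneous factor, and without a stability argument (or an explicit analysis of the algebras with $a=0$, which are quantum spaces whose point schemes are unions of lines, not elliptic curves) you cannot conclude that $(1:a:b)$ lies on $C'$ rather than on $\mathbf{V}(a)$. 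So the two missing ingredients relative to the paper are a workable replacement for the coefficient comparison — the single-point evaluation at $O_t$ — and the $\wis{PSL}_2(5)$-stability argument that removes the component $\mathbf{V}(a)$.
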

\begin{proof}
For a graded Clifford algebra $C$ with associated quadratic form $Q$ over a polynomial ring $R$, the point modules are determined by the graded prime ideals where $Q$ has rank $ \leq 2$ after specialisation (that is, the rank of the Clifford algebra $C/P$ over $R/P$ is $\leq 2$ where $P$ is a non-trivial maximal graded prime ideal of $R$), see for example \cite{LeBruynCentral}. For a $n$-dimensional Sklyanin algebra with $n \neq 4$, the variety parametrizing point modules is equal to the elliptic curve $E$, as proved by Smith in \cite{smithpoint}. The corresponding variety in $\wis{Proj}(R)$ is $E'=E/\langle \tau \rangle$.
\par In any case, this means that one has to find $t \in \C$ such that the $3 \times 3$-minors are all 0 on a curve of the form $C_t$. In particular, all the $3 \times 3$-minors in the point $(0:1:t:-t:-1)$ should be 0. Using Macaulay2, one sees that at least two equations of all the $3 \times 3$-minors are given by $-2b^2t^3+2ab^2t-2a^2$ and $-a^2bt^2-ab^2t+2a^2$. Then one can eliminate the variable $t$ to get the following equation
$$
a(-a^3b^3+a^5+b^5+2a^2b^2-8ab)=0.
$$
The projective closure of the line $\mathbf{V}(a)$ is not closed under the action of $\wis{PSL}_2(5)$ on $\PP^2$ which was constructed in \cite{DeLaet}, so we necessarily see that the curve $C'$ parametrizing 5-dimensional Sklyanin algebras associated to points of order 2 is given by the equation $$-a^3b^3+a^5+b^5+2a^2b^2-8ab=0.$$ From this one deduces that
$$
t = -\frac{(a^3-2b^2)a}{-a^3b^2+b^4+2a^2b}.
$$
However, in the fraction field of $C'$, $t$ is equal to $\frac{a^3b-b^3-2a^2}{a^4-ab^2-4b}$. As $t$ is not constant, neither are $a$ and $b$.
\end{proof}
Some remarks about $C'$:
\begin{itemize}
\item Let $C$ be the projective closure of $C'$ in $\PP^2_{[A:B:C]}$. In \cite{DeLaet} the author constructed a $\wis{PSL}_2(5)$-action on $\PP^2_{[A:B:C]}$ such that points lying in the same orbit gave isomorphic algebras. The curve $C$ is stable under this action.
\item $C$ has 6 singularities: the $\wis{PSL}_2(5)$-orbit of the point $[1:0:0]$, whose points give the algebras isomorphic to the quantum space with relations $x_i x_j + x_j x_i = 0, i\neq j$. One of course expects these 6 points to be singular if one looks at the degeneration of the point modules: for each 5-dimensional Sklyanin algebra, the point modules are given by the elliptic curve $E$. Such a family of elliptic curves degenerates to a cycle of 5 lines, so if $C$ was smooth, one expects the quantum space to have as point modules 1 cycle of 5 lines. However, a quantum $\PP^n$ has at least in its point scheme the full graph on $n+1$ points.
\item There are 12 points on $C$, which are smooth points, but the corresponding algebras are all isomorphic to the algebra with relations $x_i^2 = 0, x_i x_{i+1} + x_{i+1}x_i=0, 0 \leq i \leq 4$ (that is, they lie in the same $\wis{PSL}_2(5)$-orbit as the point $[0:0:1]$). These 12 points form the intersection of $C$ with the curve $\mathbf{V}(AB+C^2)$, which parametrizes the Koszul dual of the graded coordinate rings of all elliptic curves with level-5 structure. However, these 12 points do not give the Koszul dual of graded coordinate rings of elliptic curves, but of a cycle of 5 lines.
\end{itemize} 
\subsection{Simple representations} Before we start the description of representations of 5-dimensional Sklyanin algebras associated to points of order 2, we prove a lemma regarding the possible 1-dimensional representations of $H_p$-Clifford algebras. 
\begin{lemma}
The only $H_p$-Clifford algebras with non-trivial 1-dimensional representations are the algebras isomorphic to the quantum algebra $\C_{-1}[x_0,\ldots,x_{p-1}]$.
\end{lemma}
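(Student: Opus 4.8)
The plan is to reinterpret a $1$-dimensional representation as a point of $\PP^{p-1}$ satisfying the commutative shadow of the defining relations, and then to control the \emph{support} of that point, falling back on the $\wis{SL}_2(p)$-symmetry of the whole family of $H_p$-Clifford algebras. A $1$-dimensional representation of $C(a_0:\cdots:a_{\frac{p-1}{2}})$ is an algebra map to $\C$, so it factors through the abelianization; writing $x_l\mapsto c_l$ it is the same datum as a point $[c_0:\cdots:c_{p-1}]\in\PP^{p-1}$ satisfying the $H_p$-orbit of $a_0\{x_i,x_{-i}\}=a_ix_0^2$ read commutatively, namely
\[
2a_0\,c_{i+j}c_{-i+j}=a_i\,c_j^2,\qquad j\in\Z_p,\ 1\le i\le\tfrac{p-1}{2},
\]
where I extend the coefficients by $a_i=a_{-i}$, indices mod $p$. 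First I would dispose of the case $a_0=0$: then every relation reads $a_ic_j^2=0$, and since the projective point $(a_i)$ is nonzero some $a_i\neq0$ forces all $c_j=0$, i.e.\ there is no non-trivial representation. So I may assume $a_0\neq0$ and normalize $a_0=1$.

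Next I would run a support argument. Put $S=\{l:c_l\neq0\}$ and $T=\{i\neq0:a_i\neq0\}$, noting $T=-T$. If $j\in S$ and $i\in T$ then the right-hand side above is nonzero, whence $c_{i+j}c_{-i+j}\neq0$ and therefore $j\pm i\in S$. Thus $S$ is invariant under translation by $T$, hence by the subgroup $\langle T\rangle\le\Z_p$. Since $p$ is prime, there are only two possibilities: either $T=\emptyset$, in which case $a_i=0$ for all $i\ge1$ and the algebra is literally $\C_{-1}[x_0,\dots,x_{p-1}]$, or $\langle T\rangle=\Z_p$ and the representation has full support $S=\Z_p$. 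This reduces the lemma to the full-support case.

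When $S=\Z_p$ all the $c_l$ are invertible, so I can extract strong structure. The relations with $i=1$ read $c_{j+1}c_{j-1}=\alpha\,c_j^2$ with $\alpha=a_1/2$; hence the ratios $\rho_j=c_{j+1}/c_j$ satisfy $\rho_j=\alpha\rho_{j-1}$, so $\rho_j=\alpha^j\rho_0$. Multiplying the $i=1$ relations over all $j\in\Z_p$ forces $\alpha^p=1$, say $\alpha=\omega^s$. The symmetries $\sigma:c_j\mapsto c_{j+1}$ and $\delta:c_j\mapsto\omega^jc_j$ preserve the solution set and realize the $H_p$-action (with $z$ acting as a scalar); from $\rho_j=\omega^{sj}\rho_0$ one reads off $\sigma\cdot[c]=\delta^s\cdot[c]$, so the non-central element $\sigma\delta^{-s}$ (up to convention, $e_1e_2^{-s}$) fixes $[c]$. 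Thus $[c]$ is one of the points of $\PP^{p-1}$ with non-trivial stabilizer in $H_p$.

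Finally I would exploit the $\wis{SL}_2(p)$-action. By the Proposition identifying the $\Z_p\times\Z_p$-orbits with non-trivial stabilizer with $\PP^1_{\Z_p}$ as an $\wis{SL}_2(p)$-set, on which $\wis{SL}_2(p)$ acts transitively, there is $\psi\in\wis{SL}_2(p)$ carrying the orbit of $[c]$ to the orbit of the coordinate point $[x_0]$; adjusting by an element of $\Z_p\times\Z_p$ I may assume $\psi([c])=[x_0]$. Since $\wis{SL}_2(p)$ normalizes $H_p$, it permutes the family of $H_p$-Clifford algebras and sends $C(a_0:\cdots)$ to an isomorphic $H_p$-Clifford algebra $A^\psi$ (cf.\ \cite{DeLaet}), and by equivariance of the representation variety $A^\psi$ has $[x_0]$ as a $1$-dimensional representation. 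But $[x_0]$ has support of size $1$, and feeding a support-$1$ point into the relations (exactly as in the support argument) forces every coefficient $a_i$, $i\ge1$, of $A^\psi$ to vanish, i.e.\ $A^\psi=\C_{-1}[x_0,\dots,x_{p-1}]$; hence $C(a_0:\cdots)\cong\C_{-1}[x_0,\dots,x_{p-1}]$. The hard part is this last paragraph: correctly transporting \emph{both} the algebra isomorphism and the point $[c]$ through the $\wis{SL}_2(p)$-action and checking the equivariance of the representation scheme, so that landing at a coordinate point genuinely certifies that the transformed algebra is the quantum one. The support bookkeeping and the recurrence of the third paragraph are routine by comparison.
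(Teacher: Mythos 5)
Your proof is correct, but it follows a genuinely different route from the paper's. The paper argues by counting inside the parameter space $\PP^{\frac{p-1}{2}}$: the algebras isomorphic to $\C_{-1}[x_0,\ldots,x_{p-1}]$ form the $\wis{PSL}_2(p)$-orbit of $(1:0:\cdots:0)$, exactly $p$ of which lie in $\bigcup_{i\geq 1}D(a_i)$; it then shows that at most $p$ points of $\bigcup_{i\geq 1}D(a_i)$ can admit a non-trivial $1$-dimensional representation, by normalizing $y_0=1$, proving the recursion $y_{ki}=(a_i/2)^{\binom{k}{2}}y_i^k$ by induction, and using $p$-periodicity (the cases $k=p$ and $k=p+1$) to force $(a_i/2)^p=1$ and $y_i^p=1$, so that all coefficients $a_j=2y_jy_{-j}$ are pinned down by the single root of unity $a_i/2$; the bound $\leq p$ then matches the $p$ known quantum points and the lemma follows. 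You replace this count by structure: the support argument (invariance of the support under translation by $\langle T\rangle$, plus primality of $p$), the geometric-progression relation at $i=1$ showing that a full-support representation point has non-trivial stabilizer in $\Z_p\times\Z_p$, and transitivity of $\wis{SL}_2(p)$ on the orbits with non-trivial stabilizer (the Proposition in Section 2, i.e.\ \cite[Lemma 5.2]{DeLaet}) to transport that point to a coordinate point, where support $1$ forces all $a_i=0$, $i\geq 1$. Your route buys a few things: it never needs the orbit count $p+1$, nor the fact that the quantum points inside the open set actually possess non-trivial $1$-dimensional representations (both of which the paper's ``at most $p$ equals exactly $p$'' matching requires), and it treats the case $a_0=0$ explicitly, which the paper passes over in silence. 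The cost is concentrated exactly where you flag it: the representation variety of $T(V_1)/(R)$ lives in $\PP(V_1^*)$, so under the transform $R\mapsto(\tilde\psi\otimes\tilde\psi)(R)$ it moves by the inverse-transpose of the lift $\tilde\psi$ rather than by $\psi$ itself; this is harmless, since $\psi\mapsto\psi^{-T}$-type transforms are bijections of $\wis{SL}_2(p)$ and so transitivity still supplies the element you need, but those conventions must be fixed in a complete write-up. Note also that both proofs rest on the same external input from \cite{DeLaet}, namely that the $\wis{SL}_2(p)$-action normalizing $H_p$ permutes the $H_p$-Clifford family up to graded isomorphism.
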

\begin{proof}
The algebras isomorphic to the quantum algebra are given by the $\wis{PSL}_2(p)$-orbit of the point $(1:0:\ldots:0)$, which we know consists of $p+1$ elements (see \cite[Theorem 5.1]{DeLaet}). Apart from this point, the other points in this orbit are given by the action of the element $\begin{bmatrix}
1 & 0 \\ 1 & 1
\end{bmatrix}$ on the point $(1:2:\ldots:2)$. These other points are elements of the open subset $D(a_1 \cdots a_{\frac{p-1}{2}})\subset \bigcup_{i=1}^{\frac{p-1}{2}} D(a_i)$ and there are exactly $p$ of them. We will prove that the number of points in $\bigcup_{i=1}^{\frac{p-1}{2}} D(a_i)$ for which there exists a 1-dimensional non-trivial representation is equal to $p$. Suppose that $a_i \neq 0$ and that there exists a non-trivial 1-dimensional representation. Using the Heisenberg action, we may assume that $x_0$ is not sent to $0$. Using the $\C^*$-action, we may also assume that the image of $x_0$ is $1$. Let $y_i \in \C$ be the image of $x_i$ in $\C$ under this representation. We then have
$$
2 y_i y_{-i} = a_i, 2y_{(k+1)i} y_{(k-1)i} = a_i y_{ki}^2
$$
As $a_i \neq 0$, we necessarily have $y_i \neq 0$.
\par We have the formula 
\begin{eqnarray}
y_{ki} = \frac{a_i^{\binom{k}{2}}y_i^k}{2^{\binom{k}{2}}},
\label{eq:1dim}
\end{eqnarray}
which is trivially true for $k = 0,1$. The other cases can be proved by induction.
\par In particular, for $k = p$ we get $y_0 = \frac{a_i^{\binom{p}{2}}y_i^p}{2^{\binom{p}{2}}} = 1$.
For $k = p+1$ we find $y_{i} = \frac{a_i^{\binom{p+1}{2}}y_i^{p+1}}{2^{\binom{p+1}{2}}}$. This implies
$$
y_i^p = \frac{2^{\binom{p}{2}}}{a_i^{\binom{p}{2}}}=\frac{2^{\binom{p+1}{2}}}{a_i^{\binom{p+1}{2}}}
$$
So we have $a_i^p = 2^p$. It follows that $y_i$ is a $p$th root of unity. From equation \ref{eq:1dim} follows that $y_{ki}$ is uniquely determined for all $k$, but as $i \neq 0$, this means that all $y_j$ are uniquely determined by $y_i$. But the $a_j$ are determined by $2y_j y_{-j} = a_j$. So the number of points in $\bigcup_{i=1}^{\frac{p-1}{2}} D(a_i)$ with non-trivial 1-dimensional representations is less than or equal to $p$. As we know that there are certainly $p$ points in this set, we are done.
\end{proof}
As $A=Q_{5,1}(E,\tau)$ is a graded Clifford algebra with quadratic form $Q$ over $\C[x_0^2,x_1^2,x_2^2,x_3^2,x_4^2]$, it follows that the center $Z(A)$ of $A$ is given by the 5 elements $x_i^2, 0 \leq i \leq 4$ of degree 2 and the square root of the determinant of $Q$, call this element $c_5$ of degree 5. These 6 elements satisfy one relation of degree 10
$$
\phi(x_0^2,x_1^2,x_2^2,x_3^2,x_4^2) = \det Q = c_5^2.
$$
with $\phi$ an $H_5$-invariant polynomial of degree 5 (this follows from \cite[Proposition 5.7]{DeLaet}). As $A$ is a free module of rank $2^5$ over the polynomial ring $\C[x_0^2,x_1^2,x_2^2,x_3^2,x_4^2]$, it follows that the PI-degree of $A$ is $2^{\frac{5-1}{2}}=2^2=4$, that is, generically the simple representations are 4-dimensional. The dimension of a simple representation lying above a point in $\wis{Max}(Z(A))$ is determined by the rank of the quadratic form $Q$ after specialization. Let $S =  \wis{Max}(Z(A))$, $\A^5=\wis{Max}(\C[x_0^2,x_1^2,x_2^2,x_3^2,x_4^2])$ and denote $\psi$ for the double cover $\xymatrix{S \ar@{->>}[r]^-\psi & \A^5}$ coming from the inclusion $\xymatrix{\C[x_0^2,x_1^2,x_2^2,x_3^2,x_4^2] \ar@{^{(}->}[r]& Z(A)}$. This cover is ramified over $\mathbb{V}(\det Q)$. We will use the degree of the central elements coming from its inclusion in $A$, so the $x_i^2$ have degree 2.
\begin{theorem}
Above any point of $S$ lies a 4-dimensional simple representation of $A$ with exception of the cone above the elliptic curve $E'$. For points on the cone above the elliptic curve $E'$ there exists a unique 2-dimensional simple representation, except for the trivial representation lying above the maximal ideal $(x_0^2,x_1^2,x_2^2,x_3^2,x_4^2,c_5)$.
\end{theorem}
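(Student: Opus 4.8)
The plan is to reduce everything to the rank of the quadratic form $Q$ after specialization, using the dictionary between ranks and simple representations recalled in Section \ref{sec:GradCliff}. Since $A$ is a graded Clifford algebra with symmetric matrix $Q$ over $R = \C[x_0^2,\ldots,x_4^2]$, a point $s \in S$ lying over $\mathfrak m \in \A^5 = \wis{Max}(R)$ carries a $2^{\frac{k-1}{2}}$- or $2^{\frac{k}{2}}$-dimensional simple representation according to the parity of $k = \rank Q(\mathfrak m)$; moreover $\psi$ is ramified over $\mathbf{V}(\det Q)$, so a point where $\rank Q = 5$ has two preimages in $S$, each carrying one of the two $4$-dimensional representations, while a point with $\det Q = 0$ has a single preimage. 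Thus the whole statement follows once I determine which ranks occur and where: concretely, I must show that $\rank Q \in \{4,5\}$ off the cone over $E'$, that $\rank Q = 2$ on the cone minus its vertex, and that $\rank Q = 0$ only at the vertex, i.e. that the odd ranks $1$ and $3$ never occur.

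First I would identify the rank $\leq 2$ locus. For a $p$-dimensional Sklyanin algebra with $p \neq 4$, Smith's theorem (\cite{smithpoint}) tells us that the point modules of $\wis{Proj}(A)$ are parametrized by $E$, and their image in $\wis{Proj}(R) = \PP^4_{[x_0^2:\ldots:x_4^2]}$ is $E' = E/\langle\tau\rangle$. By Proposition \ref{prop:projcliffod} the point modules of a graded Clifford algebra form the $2$-to-$1$ cover of the rank $\leq 2$ locus, ramified over the rank $\leq 1$ locus. Hence the rank $\leq 2$ locus in $\PP^4$ is exactly $E'$, and its affine cone in $\A^5$ is where $\rank Q \leq 2$. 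Next I would exclude rank $1$ away from the vertex: the point-module cover $E \to E'$ is precisely the isogeny $E \to E/\langle\tau\rangle$, which is étale of degree $2$ since $\tau$ is a translation point, so the cover has no ramification and there can be no rank-$1$ points on $E'$ away from the cone vertex. Consequently $\rank Q = 2$ on $E' \setminus \{\text{vertex}\}$, giving a unique $2$-dimensional simple representation there (even rank), while at the vertex we have $Q = 0$, hence $c_5^2 = \det Q = 0$ and the only representation above the augmentation ideal $(x_0^2,\ldots,x_4^2,c_5)$ is the trivial one.

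The crux is to exclude rank $3$, equivalently to prove that the singular locus of the symmetric determinantal hypersurface $\mathbf{V}(\det Q) \subset \PP^4$ coincides with $E'$ rather than containing additional rank-$3$ points. I would establish this by checking, from the explicit matrix $Q$ and the equation of the curve $C'$, that the ideal of $4\times 4$ minors of $Q$ has the same radical as the ideal of $3\times 3$ minors, so that the rank $\leq 3$ and rank $\leq 2$ loci coincide; this is a finite Macaulay2 computation exploiting the $H_5$-symmetry of $Q$. Granting this, $\rank Q \geq 4$ everywhere off the cone over $E'$, with $\rank Q = 4$ on $\mathbf{V}(\det Q) \setminus E'$ (one $4$-dimensional representation above the unique preimage in $S$) and $\rank Q = 5$ on the complement of the hypersurface (one $4$-dimensional representation above each of the two preimages in $S$).

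Combining the three loci then yields the theorem. The main obstacle is exactly this rank-$3$ exclusion: unlike the rank-$1$ case it does not follow formally from the étale-ness of an isogeny, and I expect it to rest either on the explicit determinantal computation above or on the finer description of $\wis{Proj}(A)$ provided by \cite{LeBruynCentral}. A secondary point requiring care is the bookkeeping across the double cover $\psi$, so that the count of $4$-dimensional representations above a single point of $S$ (rather than of $\A^5$) is correct in both the ramified ($\rank Q = 4$) and unramified ($\rank Q = 5$) cases.
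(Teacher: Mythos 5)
Your proposal is correct, and its backbone --- reducing the theorem to the rank stratification of $Q$ via the Clifford dictionary of Section \ref{sec:GradCliff}, and disposing of the rank-$3$ stratum by a Macaulay2 check on minor ideals --- is exactly the paper's argument: the paper verifies that the $3\times 3$ minors of $Q$ agree in degree $6$ with the ideal $I$ of the cone over $E'$ and that the $4\times 4$ minors generate $I^2$, so the rank $\leq 3$ and rank $\leq 2$ loci both coincide with that cone, and on the complement the rank is $4$ or $5$, giving $4$-dimensional simples in both cases. Where you genuinely diverge is the treatment of the low-rank locus. The paper never argues via the point-module cover there: it observes that simple representations supported on the cone over $E'$ are representations of the twisted coordinate ring $\mathcal{O}_\tau(E)$, which has PI-degree $2$, hence have dimension $\leq 2$, and then invokes the lemma proved immediately beforehand (non-trivial $1$-dimensional representations occur only for algebras isomorphic to $\C_{-1}[x_0,\ldots,x_4]$) to force dimension exactly $2$. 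You instead identify the rank $\leq 2$ locus with $E'$ using Smith's theorem \cite{smithpoint} together with Proposition \ref{prop:projcliffod}, and exclude rank $1$ by unramifiedness of the degree-$2$ cover $E \to E'$; note that you do not even need to know this cover is the isogeny --- any degree-$2$ morphism between smooth genus-$1$ curves is unramified by Riemann--Hurwitz, which makes this step airtight without the identification you flag as needing care. Your route buys a purely geometric exclusion of $1$-dimensional representations (rendering the paper's lemma unnecessary for this particular theorem) and pins down rank exactly $2$ on $E'$ directly, whereas the paper's route stays inside PI/Clifford theory and is the reason that lemma appears in the paper at all. Both arguments rest on the same computer computation for the one fact neither gets for free, the absence of rank-$3$ specializations, and your bookkeeping over the double cover $\psi$ (two points of $S$ each carrying one $4$-dimensional simple when $\rank Q=5$, one ramified point when $\rank Q=4$) matches the paper's use of the ramification of $\psi$ over $\mathbf{V}(\det Q)$.
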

\begin{proof}
The fact that on the cone above $E'$ there are only simple representations of dimension  $\leq 2$ follows from the fact these representations are representations of the twisted coordinate ring $\mathcal{O}_\tau(E)$, which is of PI-degree 2. Also, as none of the Sklyanin algebras are isomorphic to the quantum space, all these representations are necessarily of dimension 2. Let $I \subset \C[x_0^2,x_1^2,x_2^2,x_3^2,x_4^2]$ be the ideal associated to the cone above $E'$, which is generated by 5 elements of degree 4. Let $J_k$ be the ideal of $\C[x_0^2,x_1^2,x_2^2,x_3^2,x_4^2]$ generated by all the $k \times k$ minors of the quadratic form $Q$. Using Macaulay2, one checks that $(J_3)_6 = I_6$, that is, the degree 6 elements in  $J_3$ and $I$ are the same. For $J_4$, one checks that all the $4 \times 4$-minors of $Q$ are the generators for $I^2$.
\par For the open set $D(J_4)$ the rank of $Q$ is 4 or 5, in both cases the corresponding simple representation is 4-dimensional.
\end{proof}
One even has the following description of $\phi$.
\begin{proposition}
Let $Sec^2 E \subset \PP^4$ be the secant variety of an elliptic curve $E$ embedded in $\PP^4$, that is, the Zariski-closure of all lines in $\PP^4$ that intersect $E$ in 2 points. We have
$$\mathbf{V}(\phi) = Sec^2 E' \subset \PP^4_{[x_0^2:\ldots:x_1^2]} $$
\end{proposition}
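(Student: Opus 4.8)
The plan is to identify both sides as hypersurfaces of degree $5$ in $\PP^4_{[x_0^2:\ldots:x_4^2]}$ and then compare them. Since $\phi = \det Q$ is the determinant of the $5\times 5$ symmetric matrix $Q$ whose entries are linear in the coordinates $z_i = x_i^2$, the polynomial $\phi$ has degree $5$ and $\mathbf{V}(\phi) = \{\det Q = 0\}$ is exactly the locus where $Q$ drops rank, i.e. $\{\rank Q \leq 4\}$. On the other hand, by the description of $\wis{Proj}$ of a graded Clifford algebra (Proposition \ref{prop:projcliffod}) together with Smith's point-module theorem used above, the curve $E'$ is precisely the point-module locus $\{\rank Q \leq 2\}$, embedded as a nondegenerate elliptic normal quintic ($\deg E' = 5$, $g=1$). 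So the content to prove is that the rank-$\leq 4$ locus of $Q$ coincides with the secant variety of its rank-$\leq 2$ locus.

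First I would establish the containment $Sec^2 E' \subseteq \mathbf{V}(\phi)$ by a kernel count. Let $P,R \in E'$ with representatives $\mathbf{p},\mathbf{r} \in \C^5$; since $Q$ is linear in the $z_i$, for a point $S = [\lambda\mathbf{p} + \mu\mathbf{r}]$ on the secant line $\overline{PR}$ one has the identity of symmetric matrices $Q(S) = \lambda Q(\mathbf{p}) + \mu Q(\mathbf{r})$. Because $P,R \in E'$, both $Q(\mathbf{p})$ and $Q(\mathbf{r})$ have rank $\leq 2$, hence kernels of dimension $\geq 3$ inside $\C^5$, and two such subspaces meet in dimension $\geq 3 + 3 - 5 = 1$. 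Any vector $v$ in this intersection lies in $\ker Q(S)$, so $\rank Q(S) \leq 4$ and $\det Q(S) = 0$. As $Sec^2 E'$ is the closure of the union of these secant lines and $\mathbf{V}(\phi)$ is closed, the containment follows.

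To finish, I would compare degrees. The secant variety of a nondegenerate irreducible curve in $\PP^4$ is irreducible of dimension $3$, hence a hypersurface, and its degree is given by the classical secant formula $\binom{d-1}{2} - g$, which for $d=5$, $g=1$ equals $5$ (this is the familiar quintic threefold attached to the elliptic normal quintic, cf. \cite{hulek}). Thus $Sec^2 E' = \mathbf{V}(s)$ for an irreducible quintic $s$, and the containment gives $s \mid \phi$ by the Nullstellensatz; since $\deg \phi = \deg s = 5$ we get $\phi = c\,s$ for a nonzero constant $c$, whence $\mathbf{V}(\phi) = Sec^2 E'$. I expect the main obstacle to be the degree bookkeeping: one must be sure that $E'$ really is the full rank-$\leq 2$ locus and is embedded as a smooth elliptic normal quintic, so that the secant formula applies with $g=1$ and $Sec^2 E'$ is genuinely a hypersurface (equivalently, $E'$ is not secant-defective), and that $\det Q$ does not vanish identically; the equality of degrees is precisely what rules out $\mathbf{V}(\phi)$ being strictly larger than $Sec^2 E'$.
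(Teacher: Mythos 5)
Your proof is correct, but it takes a genuinely different route from the paper's. The paper argues via an explicit equation: it invokes Proposition VIII.2.5 of \cite{hulek}, which says that for an elliptic normal quintic cut out by the five Heisenberg-invariant quadrics $Q_i$, the secant variety is the hypersurface $\det\left(\partial Q_i/\partial z_j\right)=0$; it then checks by a Macaulay2 computation that this Jacobian determinant vanishes in $\C[x_0^2,\ldots,x_4^2]/(\det Q)$, i.e.\ that $\det Q$ divides Hulek's quintic, and concludes equality up to a scalar since both polynomials have degree $5$. You instead prove the opposite divisibility: your kernel-intersection argument (rank $\leq 2$ at two points of $E'$ forces a common null vector along the whole secant line, since $Q$ is linear in the $z_i$) gives the conceptual containment $Sec^2 E' \subseteq \mathbf{V}(\phi)$, hence $s \mid \phi$ where $s$ is the irreducible quintic defining $Sec^2 E'$, and you close with the classical degree formula $\binom{d-1}{2}-g = 5$ rather than with Hulek's presentation. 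Both proofs share the final ``same degree $\Rightarrow$ equal up to scalar'' step and both rest on the previously established identification of $E'$ with the rank $\leq 2$ locus (Smith's theorem plus the Clifford-algebra description of point modules) --- note that the weaker bound $\rank \leq 3$ on $E'$, which also appears earlier in the paper, would not suffice for your kernel count, since two $2$-dimensional kernels in $\C^5$ need not meet. What the paper's route buys is the explicit identification of $\phi$ with Hulek's determinantal equation, at the price of a machine computation; what your route buys is a computer-free, geometrically transparent argument (degeneracy loci of a symmetric matrix of linear forms contain the secants of their deeper degeneracy loci) that would adapt to other primes, at the price of importing two classical facts: irreducibility of the secant variety of an irreducible curve and the secant degree formula for a smooth nondegenerate curve in $\PP^4$.
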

\begin{proof}
According to Proposition \Rmnum{8}.2.5 of \cite{hulek}, if $E$ is an elliptic curve given as the intersection of 5 quadrics $Q_i=z_i^2+t z_{i+1} z_{i+4} -\frac{1}{t}z_{2+i}z_{i+3}, 0 \leq i \leq 4$ in $\PP^4$, then the defining equation for $Sec^2 E$ is given by
$$
\det\left(\frac{\partial Q_i}{\partial z_j}\right)=0.
$$
One checks by a computer computation that $\det\left(\frac{\partial Q_i}{\partial x_j^2}\right)=0$ in the quotient ring $\C[x_0^2,x_1^2,x_2^2,x_3^2,x_4^2]/(\det Q)$. As both $\det\left(\frac{\partial Q_i}{\partial x_j^2}\right)$ and $\det Q$ are of the same degree, this implies that they are equal to each other up to a scalar.
\end{proof}
The (cone over) the secant variety is singular along (the cone over) $E'$, which is equal to the ramification locus of $A$. This follows also from proposition 5 of \cite{LeBruynCentral}, as the codimension of the ramification locus is 3 and the global dimension of these algebras is finite.
\subsection{From $\wis{rep}_n(A)$ to $\wis{Proj}(A)$}
We have found that there are 3 different types of non-trivial simple representations:
\begin{itemize}
\item Representations of dimension 4 where $c_5$ does not act trivial.
\item Representations of dimension 4 where $c_5$ does act trivial.
\item Representations of dimension 2.
\end{itemize}
These representations are determined by the rank of the quadratic form $Q$. Let $Y = \PP^4_{[x_0^2,\ldots,x_4^2]}$. We set
$$
Y_k = \{ p \in Y | \rank Q(P) = k\}
$$
where $Q(P)$ is the matrix in $\wis{M}_5(\mathbb{C}[t])$ with $t$ of degree 2 one gets after taking the quotient of $Q$ by the graded ideal determined by $P$. We have found that
\begin{itemize}
\item $Y_5 = Y \setminus \mathbf{V}(\phi)$,
\item $Y_4 = \mathbf{V}(\phi) \setminus E'$,
\item $Y_2 = E'$.
\end{itemize}
\par Using these facts, we can determine the fat points and point modules for these algebras using Proposition \ref{prop:projcliffod}.

\begin{theorem} The fat points and point modules of $\wis{Proj}(A)$ are determined by:
\begin{itemize}
\item For each point on $\wis{Proj}(Z(A))\setminus \mathbf{V}(\phi)$, we have one corresponding fat point module of multiplicity 4 in $\wis{Proj}(A)$.
\item For each point on $\mathbf{V}(\phi) \setminus E'$, there are 2 corresponding fat point modules of multiplicity $2$.
\item For each point on $E'$, there are 2 point modules in $\wis{Proj}(A)$. The point modules of $A$ are given by $E$ and the map between $E$ and $E'$ is the natural isogeny $\xymatrix{E \ar[r] & E'}$.
\end{itemize}
\end{theorem}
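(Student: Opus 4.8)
The plan is to read all three items directly off the graded Clifford structure, feeding the rank stratification $Y_5 = Y \setminus \mathbf{V}(\phi)$, $Y_4 = \mathbf{V}(\phi) \setminus E'$, $Y_2 = E'$ into Proposition \ref{prop:projcliffod}. Since Theorem \ref{th:SklClif} exhibits $A = Q_{5,1}(E,\tau)$ as a graded Clifford algebra with quadratic form $Q$ over $\C[x_0^2,\ldots,x_4^2]$, Proposition \ref{prop:projcliffod} applies with $n=5$, and each occurring rank value $k$ determines the multiplicities through the parity formulas.

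For the generic stratum $Y_5$ the rank is $5$, which is odd, so the multiplicity is $2^{(5-1)/2}=4$. As in the full-rank case of the representation-theoretic discussion, the two resulting fat point modules are separated in the center by the double cover $\wis{Proj}(Z(A)) \to \PP^4$: over $Y_5$ one has $\phi = \det Q \neq 0$, hence $c_5 = \pm\sqrt{\phi}$ gives exactly two points of $\wis{Proj}(Z(A))$ above each point of $Y_5$, and over each of these sits one fat point of multiplicity $4$. This yields the first item, naturally indexed by $\wis{Proj}(Z(A)) \setminus \mathbf{V}(\phi)$. For $Y_4$ the rank is $4$ (even), so the multiplicity is $2^{4/2-1}=2$ and Proposition \ref{prop:projcliffod} produces two fat points of multiplicity $2$; here $\phi = 0$ forces $c_5 = 0$, so the central cover does not split and the two fat points are separated instead by the grading shift $(0,\ldots,0,1,\ldots,1)$. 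Since $\mathbf{V}(\phi) \setminus E'$ then maps isomorphically onto $Y_4$, the second item follows. For $Y_2 = E'$ the rank is $2$ (even), giving multiplicity $2^{2/2-1}=2^0=1$: two honest point modules over each point of $E'$, again separated by the grading shift rather than by the center.

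It then remains to identify the resulting double cover of $E'$ with the isogeny $E \to E'$. By the discussion following Proposition \ref{prop:projcliffod} the point modules form a $2$-to-$1$ cover of $\bigcup_{i \leq 2} Y_i$, ramified over $\bigcup_{i\leq 1} Y_i$; but in our stratification the rank takes only the values $5,4,2$, so $Y_0 = Y_1 = Y_3 = \emptyset$ and the cover over $E'$ is everywhere unramified, hence \'etale. On the other hand, Smith's theorem \cite{smithpoint} identifies the point scheme of the $5$-dimensional Sklyanin algebra $Q_{5,1}(E,\tau)$ with the elliptic curve $E$ itself. Thus the \'etale double cover of $E'$ has total space $E$, and since $E' = E/\langle\tau\rangle$ with $\tau$ of order $2$, the quotient map $\xymatrix{E \ar[r] & E/\langle\tau\rangle = E'}$ is precisely the degree-$2$ isogeny realizing this cover. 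The trivial representation over the irrelevant ideal $(x_0^2,\ldots,x_4^2,c_5)$ disappears in $\wis{Proj}$ and so is correctly absent.

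The plug-ins into Proposition \ref{prop:projcliffod} are routine once the ranks are known, and the two even-rank splittings are immediate from the grading-shift data. I expect the genuine work to lie in the last paragraph: matching the abstract \'etale double cover produced by the Clifford classification with Smith's description of the point scheme as $E$, and verifying that among the three connected \'etale double covers of $E'$ (one for each nonzero $2$-torsion point) the one that occurs is exactly the quotient by $\langle\tau\rangle$. This should follow by tracking the $H_5$-equivariance together with the translation-by-$\tau$ action on point modules through the identification.
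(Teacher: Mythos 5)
Your overall strategy coincides with the paper's: its entire proof is the one line ``apply Proposition \ref{prop:projcliffod} to this special case,'' fed by the rank stratification $Y_5$, $Y_4$, $Y_2$, and your handling of the two even-rank strata and of the identification of the point-module cover of $E'$ with the isogeny $E \to E/\langle\tau\rangle$ (via Smith's theorem, which the paper had already invoked when determining $C'$) fills this in correctly. However, your analysis of the generic stratum contains a genuine error. You claim that over each point of $Y_5$ the two solutions $c_5 = \pm\sqrt{\phi}$ give two distinct points of $\wis{Proj}(Z(A))$, each carrying one fat point of multiplicity $4$, so that two fat points lie over each point of $Y_5$. This contradicts the very proposition you are applying: for $p \in Y_k$ with $k$ odd, Proposition \ref{prop:projcliffod} produces a \emph{unique} graded prime $P$ with $A(p) \cong \wis{M}_{4}(\C[x,x^{-1}])$, $\deg x = 1$, and hence exactly \emph{one} fat point of multiplicity $4$ over $p$.

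The source of the error is that the splitting into two sheets is a purely affine phenomenon. Since $\deg c_5 = 5$ is odd while $\deg x_i^2 = 2$ is even, the element $\lambda = -1 \in \C^*$ fixes every coordinate $x_i^2$ and sends $c_5 \mapsto -c_5$; consequently the two affine points $(u, c_5)$ and $(u, -c_5)$ of $\wis{Max}(Z(A))$ lie in one $\C^*$-orbit and define the \emph{same} point of $\wis{Proj}(Z(A))$. Thus $\wis{Proj}(Z(A)) \setminus \mathbf{V}(\phi) \to Y_5$ is a bijection, not a double cover, and the two $4$-dimensional simple representations sitting over a fixed affine central point are degree shifts of one another, hence isomorphic as objects of $\wis{Proj}(A)$: the graded field $\C[x,x^{-1}]$ with $\deg x = 1$ has a unique graded simple module, unlike $\C[y,y^{-1}]$ with $\deg y = 2$, which is what makes the even-rank strata genuinely carry two fat points. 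The ``separated in the center'' statement you imported is the $\wis{Max}$-level statement from the subsection on representations of graded Clifford algebras and does not descend to $\wis{Proj}$. With this correction the first bullet reads exactly as in the theorem --- one fat point of multiplicity $4$ for each point of $\wis{Proj}(Z(A)) \setminus \mathbf{V}(\phi) \cong Y_5$ --- and the remainder of your argument stands.
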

\begin{proof}
All this follows from Proposition \ref{prop:projcliffod} applied to this special case.	
\end{proof}
In \cite{OdesskiiFeigincenter} Odesskii and Feigin proved that for an elliptic curve $E$ and $\tau$ a torsion point of order $m$, the center of the Sklyanin algebra $Q_{n,1}(E,\tau)$ for $n$ odd is generated by a central element $c_n$ of degree $n$ and $n$ algebraically independent elements of degree $m$, with one relation of the form 
$$
\phi(u_0,u_1,\ldots,u_{n-1}) = c_n^m.
$$
Moreover, $Q_{n,1}(E,\tau)$ is a finite module over its center. For $n=5$ and assuming $(3,m)=(5,m)=1$, these results with respect to points of order 2 might suggest the following regarding the $\wis{Proj}$ of these algebras:
\begin{enumerate}
\item The polynomial $\phi(u_0,u_1,u_2,u_3,u_4)$ is the relation corresponding to the secant variety $Sec^2(E')$, where $E' = E/\langle \tau \rangle$.
\item The PI-degree of $Q_{n,1}(E,\tau)$ is $m^2$.
\item The open set $D(c_5)\subset \wis{Proj}(\C[u_0,u_1,u_2,u_3,u_4,c_5])$ corresponds to fat points of multiplicity $m^2$. 
\item For each point in $\mathbf{V}(c_5) \setminus E'$, there are $m$ fat points of multiplicity $m$.
\item For a point on $E'$, there are $m$ point modules coming from the isogeny $\xymatrix{E \ar[r]& E'}$.
\end{enumerate}
The last part is an easy consequence from the fact that these point modules are point modules of the twisted coordinate ring $\mathcal{O}_{[3]\tau}(E)$. The other conjectures will be the subject of future work.

\end{document}